\setlist[enumerate]{
  label=(\thethm.\arabic*),
  before={\setcounter{enumi}{\value{equation}}},
  after={\setcounter{equation}{\value{enumi}}},
  itemsep=1ex
}
\setlist[itemize]{
  leftmargin=*,
  topsep=1ex,
  itemsep=1ex,
  label=$\circ$
}
\newtheorem*{thm-plain}{Theorem}
\newtheorem{thm}{Theorem}[section]
\newtheorem{lem}[thm]{Lemma}
\newtheorem{cor}[thm]{Corollary}
\numberwithin{equation}{thm}
\newtheorem{bigthm}{Theorem}
\theoremstyle{definition}
\newtheorem{dfn}[thm]{Definition}
\newtheorem*{dfn-plain}{Definition}
\theoremstyle{remark}
\newtheorem{clm}[thm]{Claim}
\newtheorem{ntn}[thm]{Notation}
\newtheorem*{ntn-plain}{Notation}
\newtheorem{setup}[thm]{Setup}
\newtheorem{rem}[thm]{Remark}
\newtheorem*{rem-plain}{Remark}
\newcommand{\inv}{^{-1}}
\newcommand{\from}{\colon}
\newcommand{\imp}{\Rightarrow}
\newcommand{\lto}{\longrightarrow}
\newcommand{\x}{\times}
\newcommand{\surj}{\twoheadrightarrow}
\newcommand{\isom}{\cong}
\newcommand{\defn}{\coloneqq}
\newcommand{\ndef}{\eqqcolon}
\newcommand{\tensor}{\otimes}
\newcommand{\id}{\mathrm{id}}
\newcommand{\wt}{\widetilde}
\renewcommand{\d}{\mathrm d}
\newcommand{\delbar}{\overline\partial}
\newcommand{\ddc}{\mathrm d \mathrm d^c}
\newcommand{\ddual}{^{\smash{\scalebox{.7}[1.4]{\rotatebox{90}{\textup\guilsinglleft} \hspace{-.5em} \rotatebox{90}{\textup\guilsinglleft}}}}}
\newcommand{\acts}{\ \rotatebox[origin=c]{-90}{\ensuremath{\circlearrowleft}}\ }
\newcommand{\factor}[2]{\left. \raise 2pt\hbox{$#1$} \right/\hskip -2pt \raise -2pt\hbox{$#2$}}
\DeclareMathOperator{\rk}{rk}
\DeclareMathOperator{\Sym}{Sym}
\DeclareMathOperator{\Aut}{Aut}
\newcommand{\lref}{\labelcref}
\newcommand{\set}[1]{\left\{ #1 \right\}}
\def\rd#1.{\lfloor{#1}\rfloor}
\def\rp#1.{\lceil{#1}\rceil}
\def\tw#1.{\langle{#1}\rangle}
\newcommand{\la}{\langle}
\newcommand{\ra}{\rangle}
\renewcommand{\O}[1]{\mathscr{O}_{#1}}
\newcommand{\Omegap}[2]{\Omega_{#1}^{#2}}
\newcommand{\Omegar}[2]{\Omega_{#1}^{[#2]}}
\newcommand{\T}[1]{\mathcal{T}_{#1}}
\newcommand{\canmod}{\mathrm{can}}
\newcommand{\reg}[1]{{#1}_{\mathrm{reg}}}
\newcommand{\cc}[2]{\mathrm{c}_{#1} \!\left( #2 \right)}
\def\Hnought#1.#2.{\mathit{\Gamma} \!\left( #1, #2 \right)}
\def\HH#1.#2.#3.{\mathrm{H}^{#1} \!\left( #2, #3 \right)}
\def\hh#1.#2.#3.{h^{#1} \!\left( #2, #3 \right)}
\def\RR#1.#2.#3.{R^{#1} #2_* #3}
\def\HHc#1.#2.#3.{\mathrm{H}_{\mathrm{c}}^{#1} \!\left( #2, #3 \right)}
\def\Hh#1.#2.#3.{\mathrm{H}_{#1} \!\left( #2, #3 \right)}
\def\Hom#1.#2.{\mathrm{Hom} \!\left( #1, #2 \right)}
\def\End#1.{\mathrm{End} \!\left( #1 \right)}
\def\sHom#1.#2.{\mathscr{H}\!om \!\left( #1, #2 \right)}
\def\Ext#1.#2.#3.{\mathrm{Ext}^{#1} \!\left( #2, #3 \right)}
\def\sExt#1.#2.#3.{\mathscr{E}\!xt^{#1} \!\left( #2, #3 \right)}
\def\Link#1.#2.{\mathrm{Link} \!\left( #1, #2 \right)}
\renewcommand{\H}[1]{\mathbb H^{#1}}
\renewcommand{\P}[1]{\mathbb P^{#1}}
\newcommand{\SL}[2]{\mathrm{SL}(#1, #2)}
\newcommand{\U}[1]{\mathrm{U}(#1)}
\newcommand{\SU}[1]{\mathrm{SU}(#1)}
\newcommand{\ke}{\mathrm{KE}}
\newcommand{\kahler}{K{\"{a}}hler\xspace}
\newcommand{\qe}{quasi-\'etale\xspace}
\DeclareMathOperator{\Exc}{Exc}
\DeclareMathOperator{\Ric}{Ric}
\DeclareMathOperator{\tr}{tr}
\newcommand{\eps}{\varepsilon}
\renewcommand{\theta}{\vartheta}
\renewcommand{\phi}{\varphi}
\newcommand{\Q}{\ensuremath{\mathbb Q}}
\newcommand{\R}{\ensuremath{\mathbb R}}
\newcommand{\C}{\ensuremath{\mathbb C}}
\newcommand{\bB}{\ensuremath{\mathbb B}}
\newcommand{\bH}{\ensuremath{\mathbb H}}
\newcommand{\bP}{\ensuremath{\mathbb P}}
 \newcommand{\frb}{\mathfrak b} \newcommand{\frc}{\mathfrak c}
\newcommand{\frA}{\mathfrak A} \newcommand{\frS}{\mathfrak S}
  \newcommand{\sC}{\mathscr C}
 \newcommand{\sE}{\mathscr E} \newcommand{\sF}{\mathscr F}
\newcommand{\sG}{\mathscr G}  
  \newcommand{\sL}{\mathscr L}
\newcommand{\cD}{\mathcal D}
\definecolor{forrest}{RGB}{81,133,49}
\definecolor{mydarkblue}{RGB}{10,92,153}
\title{Semispecial tensors and quotients of the polydisc}
\author{Patrick Graf}
\address{Lehrstuhl f\"ur Mathematik I, Universit\"at Bayreuth, 95440 Bayreuth, Germany}
\email{\href{mailto:patrick.graf@uni-bayreuth.de}{patrick.graf@uni-bayreuth.de}}
\urladdr{\href{https://patrickgraf.gitlab.io/en/}{www.graficland.uni-bayreuth.de}}
\author{Aryaman Patel}
\address{AG Lazi\'c, Universit\"at des Saarlandes, 66123 Saarbr\"ucken, Germany}
\email{\href{mailto:aryaman.patel@math.uni-sb.de}{aryaman.patel@math.uni-sb.de}}
\urladdr{\href{https://sites.google.com/view/aryamanpatel/home}{https://sites.google.com/view/aryamanpatel/}}
\date{May 6, 2026}
\keywords{Varieties with ample canonical divisor, uniformization, klt singularities, K\"ahler--Einstein metrics, holonomy groups, polydiscs, Bochner principle}
\subjclass[2020]{14E20, 14E30, 32M15, 32Q20, 32Q30}
\begin{document}

\begin{abstract}
Let $X$ be a complex-projective variety with klt singularities and ample canonical divisor.
We prove that $X$ is a quotient of the polydisc by a group acting properly discontinuously and freely in codimension one if and only if $X$ admits a semispecial tensor with reduced hypersurface.
This extends a result of Catanese and Di Scala to singular spaces, and answers a question raised by these authors.
As a key step in the proof, we establish the Bochner principle for holomorphic tensors on klt spaces in the negative K{\"{a}}hler--Einstein case.
\end{abstract}

\maketitle

\section{Introduction}

The uniformization theorem of Koebe and Poincar\'e (1907) states that the only simply connected Riemann surfaces are the Riemann sphere $\P1_\C$, the complex plane $\C$, and the unit disc $\bB^1$.
In higher dimensions, one quickly realizes that a similar result is impossible, as there are just too many simply connected complex manifolds in any given dimension $n \ge 2$.
One standard way to solve this issue is to modify the question by fixing some simply connected ``model space'' $\cD$ with well-understood geometry and asking for a geometric or numerical characterization of those compact complex manifolds (or smooth projective varieties) whose universal cover $\wt X$ is biholomorphic to $\cD$.

The first such result is the well-known Miyaoka--Yau inequality~\cite{Yau78} for smooth projective varieties $X$ with ample canonical divisor $K_X$, together with the statement that equality is attained if and only if $X$ is uniformized by the unit ball $\cD = \bB^n$.
Using the theory of Higgs bundles, Simpson~\cite{Simpson88} gave a more general uniformization result for arbitrary bounded symmetric domains $\cD = \factor{G_0}{K_0}$.
Roughly speaking, he proved that $\wt X \isom \cD$ if and only if
\begin{itemize}[leftmargin=2em]
\item[\textsf{(A)}] the tangent bundle $\T X$ has a reduction of structure group to $K$, the complexification of $K_0$, and
\item[\textsf{(B)}] a certain vector bundle $\sE$ associated to this reduction satisfies the Chern class equality $\cc2\sE \cdot [K_X]^{n-2} = 0$.
\end{itemize}
We would like to highlight two extreme cases of this equivalence:
\begin{itemize}
\item If $\cD = \bB^n = \factor{\SU{1, n}}{\mathrm S(\U1 \x \U n)}$ is the \emph{unit ball}, then condition~\textsf{(A)} becomes vacuous, while \textsf{(B)} boils down to having equality in the Miyaoka--Yau inequality.
One thus recovers the aforementioned result.
\item If $\cD = \H n = \factor{{\SL2\R}^n}{{\U1}^n}$ is the \emph{polydisc}, then \textsf{(A)} boils down to $\T X$ splitting into a direct sum of line bundles, while \textsf{(B)} becomes vacuous (as observed by Beauville~\cite{Beauville00}).
\end{itemize}

In a slightly different vein, Catanese and Di Scala~\cite{CataneseDiScala13, CataneseDiScala14} have given several uniformization results for various types of bounded symmetric domains $\cD$ in terms of the existence of certain kinds of holomorphic tensors on $X$.
For example, in the case of the polydisc, they proved that $\wt X \isom \H n$ if and only if $X$ admits a so-called \emph{semispecial tensor} with reduced hypersurface, i.e.~a nonzero section
\[ \psi \in \HH0.X.\Sym^{n}(\Omegap X1)(-K_X) \tensor \eta. \setminus \set0 \]
such that for any $p \in X$, the scheme-theoretic hypersurface $\set{ \psi_p = 0 } \subset \bP(T_p X)$ is reduced, where $\eta$ is a line bundle on $X$ such that $\eta^{\tensor 2} \isom \O X$~\cite[Thm.~1.1]{CataneseDiScala13}.

More recently, Catanese~\cite{Catanese25} has proved similar results for Deligne--Mostow orbifolds uniformized by bounded symmetric domains $\cD$ of tube type.
Roughly speaking, a Deligne--Mostow orbifold is a pair $(Z, \Delta)$ which is covered by local orbifold charts $\phi \from \Omega \to U = \factor \Omega G$ with $\Omega \subset \C^n$ an open subset (see~\cite[Def.~3(4)]{Catanese25} for the precise definition).
Catanese also asks whether the Deligne--Mostow condition can be replaced by the weaker assumption of having klt singularities.
Our main result below will answer this question affirmatively, at least in the special case where $\cD$ is the polydisc and the orbifold divisor $\Delta$ is empty.

\subsection*{Main result}

The above results apply to projective manifolds with ample canonical divisor $K_X$.
Hence it seems natural to look for a way of producing such manifolds.
The Minimal Model Program provides such a way: by the landmark results of~\cite{BCHM10}, any projective manifold $X$ of general type (i.e.~with maximal Kodaira dimension) admits a canonical model $X_\canmod$, which is birational to $X$ and has ample canonical divisor $K_{X_\canmod}$.
But there is a catch: in general, $X_\canmod$ will not be a manifold any longer, but a variety with \emph{canonical singularities}.
Consequently, the above-mentioned uniformization results cannot be applied to canonical models in general.

Therefore, in recent years there has been a lot of activity centered around the uniformization problem for varieties with canonical, or more generally klt, singularities:~\cite{GKPT1_ENS}, \cite{Patel23}, \cite{MYeq} and~\cite{GrafPatel24}, to name a few.
However, all of these focus on statements in the style of Miyaoka--Yau and Simpson.
In this article, we investigate what can be said about the Catanese--Di Scala approach in the singular setting.
We will concentrate on the case of the polydisc $\cD = \H n$ and leave the other cases to a future work.

Note that the uniformization problem in the smooth case can be rephrased as follows: \emph{given a projective manifold $X$ with ample canonical divisor and a bounded symmetric domain $\cD$, when is $X \isom \factor\cD\Gamma$ for some discrete cocompact subgroup $\Gamma \subset \Aut(\cD)$ acting freely?}
It is this formulation (and not the one involving the universal cover) that lends itself best to generalization to the singular case.
Namely, one may simply replace the freeness requirement on $\Gamma$ by freeness \emph{in codimension one}.
Accordingly, our main result is as follows.

\stepcounter{thm}

\begin{bigthm}[Characterization of singular polydisc quotients, \cref{main1'}] \label{main1}
Let $X$ be an $n$-dimensional normal projective variety with klt singularities and ample canonical divisor $K_X$.
The following are equivalent:
\begin{enumerate}
\item\label{main1.1} We have $X \isom \factor{\H n}\Gamma$, where $\Gamma \subset \Aut(\H n)$ is a discrete cocompact subgroup whose action is free in codimension one.
\item\label{main1.3} There is a semispecial tensor $\psi$ on $X$ such that for some point $p \in \reg X$, the scheme-theoretic hypersurface $\set{ \psi_p = 0 } \subset \bP(T_p X)$ is reduced.
\end{enumerate}
\end{bigthm}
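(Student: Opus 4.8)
The plan is to prove the two implications of \cref{main1} separately; the one from \lref{main1.1} to \lref{main1.3} is elementary, while its converse carries the weight of the argument. For that first implication, write $q\colon\H n\to X$ for the quotient map, which is \qe by hypothesis. In product coordinates $(z_1,\dots,z_n)$ on $\H n$, the symmetric product $\zeta\defn\d z_1\odot\cdots\odot\d z_n$ is a global section of $\Sym^n\Omegap{\H n}{1}$, while $\d z_1\wedge\cdots\wedge\d z_n$ trivialises $\can{\H n}$. Both $\zeta$ and $\d z_1\wedge\cdots\wedge\d z_n$ are multiplied by the same holomorphic Jacobian factor under the subgroup $\Aut(\bB^1)^n\subset\Aut(\H n)=\Aut(\bB^1)^n\rtimes S_n$, so their ratio $\zeta\tensor(\d z_1\wedge\cdots\wedge\d z_n)\inv\in H^0\bigl(\H n,\,\Sym^n\Omegap{\H n}{1}\tensor\can{\H n}\inv\bigr)$ is $\Aut(\bB^1)^n$-invariant and is scaled by an element of $\Aut(\H n)$ only through the sign of its permutation part. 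It is thus $\Gamma$-semi-invariant for a character $\Gamma\to\set{\pm 1}$, hence descends to a reflexive tensor $\psi$ on $X$ twisted by the associated $2$-torsion line bundle, i.e.\ to a semispecial tensor. For $p=q(\tilde p)\in\reg X$, the subscheme $\set{\psi_p=0}\subset\bP(T_pX)$ is cut out by the polynomial $x_1\cdots x_n$ on $T_{\tilde p}\H n$, hence is the union of the $n$ distinct coordinate hyperplanes and is reduced.

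For the converse I would first equip $X$ with its singular K\"ahler--Einstein metric $g_{\ke}$, normalised by $\Ric(g_{\ke})=-g_{\ke}$, which exists because $X$ is klt with ample $K_X$ and which is smooth on $\reg X$. The crucial ingredient is then the Bochner principle for holomorphic tensors on klt spaces in the negative K\"ahler--Einstein case: a semispecial tensor is a holomorphic tensor of precisely the type — the twists by $\can X\inv$ and a torsion line bundle cancel all the Ricci-curvature contributions in the Weitzenb\"ock formula — for which that principle forces parallelism. Hence $\psi$ is parallel for the Chern connection of $g_{\ke}$ on $\reg X$; in particular $\lvert\psi\rvert$ is constant, so $\psi$ vanishes nowhere on $\reg X$ (it is nonzero at $p$, as $\set{\psi_p=0}$ is a hypersurface), and $\psi$ is invariant under the holonomy group $\mathrm{Hol}\subset\U n$ of $g_{\ke}$.

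Next I would feed this into the holonomy decomposition for K\"ahler--Einstein klt spaces (cf.\ \cite{GKPT1_ENS, GrafPatel24}): there is a \qe Galois cover $\gamma\colon\hat X\to X$ with $\hat X\isom\hat X_1\x\cdots\x\hat X_r$, each $\hat X_j$ a normal projective klt variety with ample canonical divisor and irreducible holonomy, and $g_{\ke}$ pulling back to the product metric. The reflexive pullback $\gamma^{[*]}\psi$ is again parallel and nowhere zero, and $\set{(\gamma^{[*]}\psi)_{\hat p}=0}$ is reduced, so its $n$ linear factors span $n$ pairwise distinct hyperplanes in $T_{\hat p}\hat X$, each preserved by the connected holonomy. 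Decomposing along the product and using that $\mathrm{Hol}^0$ acts irreducibly on each $T_{\hat p}\hat X_j$, one finds that these linear forms vanish identically on $T_{\hat p}\hat X_j$ as soon as $\dim\hat X_j\ge2$; but then $(\gamma^{[*]}\psi)_{\hat p}$ lies in a parallel subbundle which, up to the twist $\mathrm{pr}_j^*\can{\hat X_j}\inv$, is pulled back from $\prod_{k\ne j}\hat X_k$, so restricting $\gamma^{[*]}\psi$ to a fibre $\set{y}\x\hat X_j$ produces a nowhere-vanishing section of the anti-ample bundle $\can{\hat X_j}\inv$ — absurd. Therefore $\dim\hat X_j=1$ for all $j$ and $r=n$, each $\hat X_j$ is a smooth projective curve of genus $\ge2$, so $\hat X_j\isom\factor{\bB^1}{\Gamma_j}$ for a cocompact torsion-free Fuchsian group $\Gamma_j$, and $\hat X\isom\factor{\H n}{(\Gamma_1\x\cdots\x\Gamma_n)}$ with $\H n$ the universal cover of the projective manifold $\hat X$. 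Lifting $\Gal(\factor{\hat X}{X})$ to $\Aut(\H n)$ and adjoining $\Gamma_1\x\cdots\x\Gamma_n$ yields a discrete cocompact subgroup $\Gamma\subset\Aut(\H n)$ with $X\isom\factor{\H n}{\Gamma}$, acting freely in codimension one because $\H n\to\hat X$ is \'etale and $\gamma$ is \qe.

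The main obstacle is the Bochner principle itself: unlike in the Ricci-flat setting there is no avoiding the negative curvature term, and the underlying Weitzenb\"ock/integration-by-parts argument has to be run on the \emph{incomplete} manifold $\reg X$, the point being to make the boundary contributions near $\sing X$ vanish — which is exactly where klt-ness is used. A secondary subtlety is that $\set{\psi_p=0}$ is assumed only reduced, not in general position; this is nonetheless precisely what is needed for the slicing argument above to force every de Rham factor to be one-dimensional, a conclusion that would fail for a semispecial tensor whose hypersurface is non-reduced.
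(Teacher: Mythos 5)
Your implication \lref{main1.1} $\imp$ \lref{main1.3} is essentially the paper's argument (descend $\d z_1\cdots\d z_n/(\d z_1\wedge\cdots\wedge\d z_n)$ via the sign character of $\Aut(\bH)^n\rtimes\frS_n\to\frS_n$, absorbing the sign into a $2$-torsion sheaf $\eta$), and your first step of the converse --- singular \kahler--Einstein metric plus a negative-curvature Bochner principle forcing $\psi$ to be parallel --- is also the paper's route. But the next step contains a genuine gap. You invoke ``the holonomy decomposition for \kahler--Einstein klt spaces'': a \qe Galois cover $\hat X\to X$ that splits as a global product $\hat X_1\x\cdots\x\hat X_r$ of klt varieties with irreducible holonomy, with $g_\ke$ the product metric. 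No such theorem is available in the negative \kahler--Einstein klt setting, and neither of the references you point to contains one; a global product decomposition of the \emph{variety} (as opposed to the pointwise de Rham decomposition of $T_xX$ under $\operatorname{Hol}^\circ$) would require both compactness of the full holonomy group and algebraic integrability of the resulting parallel foliations, each of which is a hard theorem in its own right. Worse, the ordering is circular: to get a cover on which the parallel distributions are globally defined you need $\operatorname{Hol}/\operatorname{Hol}^\circ$ finite, i.e.\ compactness of the full holonomy group, which the paper explicitly notes is \emph{not} known a priori here and which it only derives \emph{after} using the semispecial tensor to compute $\operatorname{Hol}^\circ$ --- whereas you want to use the decomposition first and analyze the tensor on the product afterwards.

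The paper's actual argument stays entirely local at one smooth point $x$: it takes the totally decomposed action $H^\circ=H_1^\circ\x\cdots\x H_m^\circ$ on $V=T_xX$ (Besse), notes that the parallel tensor gives an $H^\circ$-invariant degree-$n$ polynomial $\psi_x$ on $V$, and then runs the Catanese--Di Scala classification of such invariants on bounded symmetric domains: $\psi_x$ is independent of the non-symmetric factors and factors as $c\prod N_j^{k_j}$ with $\deg N_j=\rk\cD_j'$ over the tube-type factors; reducedness forces $k_j\le 1$, and the resulting degree/rank count $n=\sum k_jr_j\ge\sum\dim\cD_j'+\dim\cD''\ge\sum r_j+\dim\cD''$ collapses everything to $H^\circ={\U1}^n$. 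Only then does a normalizer computation give compactness of $H$, after which one passes to a \qe cover with split tangent bundle and cites the existing polydisc uniformization results of \cite{Patel23} and \cite{GrafPatel24} --- which is also the ingredient that replaces your unproven product splitting. Your slicing argument (restricting to a fibre $\set{y}\x\hat X_j$ to produce a nowhere-vanishing section of the anti-ample $\can{\hat X_j}\inv$) is an attractive alternative to the tube-domain rank count, but as written it rests entirely on the unavailable decomposition. Finally, you correctly flag the Bochner principle as the main analytic obstacle but do not supply it; since it is one of the two pillars of the proof, it cannot be left as an acknowledged black box.
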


\begin{rem-plain}
For the notion of semispecial tensors on klt spaces, see \cref{def semispecial}.
It is a straightforward extension to normal varieties of the corresponding notion introduced by Catanese--Di Scala in~\cite{CataneseDiScala13}, as mentioned on the previous page.
\end{rem-plain}

\subsection*{The Bochner principle}

The well-known Bochner principle states that on a compact \kahler--Einstein manifold $X$, any holomorphic tensor of contravariant degree~$p$ and covariant degree~$q$ is parallel, or even zero (for certain values of $p$ and $q$ depending on the sign of the Ricci tensor).
The original reference for this result is~\cite{BochnerYano53}, but~\cite[Thm.~1(2)]{Kobayashi80} is closer to the version that we present below.

The Bochner principle is one of the main ingredients to the proofs in~\cite{CataneseDiScala13, CataneseDiScala14}, since it provides a link between the holonomy of $X$ and the existence of semispecial and slope zero tensors.
It is therefore not surprising that establishing a singular version of this principle is also one of the main steps in our argument.

\stepcounter{thm}

\begin{bigthm}[Bochner principle, \cref{bochner}] \label{bochner main}
Let $X$ be a normal projective variety with klt singularities and ample canonical divisor $K_X$.
Let $\omega_\ke$ be the singular \kahler--Einstein metric on $X$ constructed by~\cite{EGZ09} and~\cite{BermanGuenancia14}.
Consider a holomorphic tensor
\[ \tau \in \HH0.\reg X.\T{\reg X}^{\tensor p} \tensor \Omega_{\reg X}^{\tensor q}.. \]
\begin{enumerate}
\item If $p = q$, then $\tau$ is parallel with respect to $\omega_\ke$.
\item If $p > q$, then $\tau = 0$.
\end{enumerate}
\end{bigthm}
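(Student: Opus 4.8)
The plan is to reduce everything to the smooth K\"ahler manifold $\reg X$, on which $\omega_\ke$ restricts to a genuine K\"ahler--Einstein metric with $\Ric(\omega_\ke) = -\omega_\ke$ (after the usual normalization), and to run the classical Bochner--Weitzenb\"ock argument there. Since the Einstein condition is local, the Bochner identity holds on $\reg X$ unchanged: writing $\sE \defn \T{\reg X}^{\tensor p}\tensor\Omega_{\reg X}^{\tensor q}$ and letting $\nabla$ be the Chern connection of $\omega_\ke$, every holomorphic section $\tau$ of $\sE$ satisfies
\[
  \Delta_{\omega_\ke}\lvert\tau\rvert^2_{\omega_\ke} \;=\; \bigl\lvert\nabla^{1,0}\tau\bigr\rvert^2_{\omega_\ke} \;+\; (p-q)\,\lvert\tau\rvert^2_{\omega_\ke} \qquad\text{on } \reg X ,
\]
where $\Delta_{\omega_\ke} = \tr_{\omega_\ke}(\mathrm{i}\del\delbar\,\cdot\,)$; the curvature term reduces to the scalar $(p-q)$ because only the trace $\mathrm{i}\Lambda\Theta(\sE)$ of the curvature of $\sE$ enters, which the Einstein equation turns into a multiple of the identity (cf.~\cite{Kobayashi80}, \cite{BochnerYano53}). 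Over a \emph{compact} base, integrating this identity and using the maximum principle would settle both cases at once; so the whole problem is to justify the integration by parts over the open, a priori incomplete, manifold $\reg X$.

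For this I would need two facts, both of which genuinely use that $X$ is klt. The first, \emph{(a)}, is that there exist cut-offs $\chi_\eps \in C^\infty_c(\reg X)$ with $0 \le \chi_\eps \le 1$, $\chi_\eps \nearrow 1$ pointwise on $\reg X$, and $\int_{\reg X}\lvert\d\chi_\eps\rvert^2_{\omega_\ke}\,\omega_\ke^n \to 0$ --- i.e.\ $\sing X$ has vanishing $W^{1,2}$-capacity for $\omega_\ke$. This follows from $\codim{X}{\sing X} \ge 2$ together with a bound on the rate at which $\omega_\ke$ may degenerate near $\sing X$, the latter being part of the analysis behind the construction of $\omega_\ke$ (bounded local potentials by~\cite{EGZ09}, which in particular gives $\int_{\reg X}\omega_\ke^n < \infty$, and the local behaviour at klt singularities, cf.~\cite{BermanGuenancia14}). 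The second, \emph{(b)}, is that $\lvert\tau\rvert_{\omega_\ke}$ is bounded on $\reg X$; here it is essential that $p \ge q$ (the only range occurring in the theorem), so that the $p$ contravariant slots --- on which $\omega_\ke$ induces $\omega_\ke^{-1}$ --- counterbalance the $q$ covariant ones. Since $X$ is normal, $\tau$ extends across $\sing X$ to a section of a coherent sheaf on $X$, so its components relative to a fixed algebraic local frame of $\sE$ are bounded near $\sing X$; the content of \emph{(b)} is therefore a comparison, near the singularities, between that frame and the Hermitian metric $\omega_\ke$ induces on $\sE$. I expect \emph{(b)} to be the main obstacle: the natural route is a log resolution $\pi \from \wt X \to X$, on which klt-ness of $X$ bounds the discrepancies and hence the orders of vanishing and of blow-up of $\pi^*\omega_\ke$ (and of the density $(\pi^*\omega_\ke)^n$) along $\Exc(\pi)$, which one then plays off against the order of $\pi^*\tau$; estimates of this flavour for $\omega_\ke$ on klt spaces are available in the literature, cf.~\cite{GKPT1_ENS}.

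Granting \emph{(a)} and \emph{(b)}, one concludes as in the compact case. Multiplying the Bochner identity by $\chi_\eps^2$ and integrating over $\reg X$, integration by parts turns the left-hand side into $-2\int_{\reg X}\chi_\eps\,\la\d\chi_\eps,\d\lvert\tau\rvert^2_{\omega_\ke}\ra_{\omega_\ke}\,\omega_\ke^n$, whose absolute value is, by $\lvert\d\lvert\tau\rvert^2_{\omega_\ke}\rvert_{\omega_\ke} \le 2\lvert\tau\rvert_{\omega_\ke}\lvert\nabla^{1,0}\tau\rvert_{\omega_\ke}$ (using $\delbar\tau = 0$), Cauchy--Schwarz and Young, at most $\tfrac12\int_{\reg X}\chi_\eps^2\lvert\nabla^{1,0}\tau\rvert^2_{\omega_\ke}\,\omega_\ke^n + C\bigl(\sup_{\reg X}\lvert\tau\rvert_{\omega_\ke}\bigr)^2\int_{\reg X}\lvert\d\chi_\eps\rvert^2_{\omega_\ke}\,\omega_\ke^n$. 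By \emph{(b)} the last term makes sense and, by \emph{(a)}, tends to $0$; rearranging and letting $\eps \to 0$ by monotone convergence yields
\[
  \tfrac12\int_{\reg X}\bigl\lvert\nabla^{1,0}\tau\bigr\rvert^2_{\omega_\ke}\,\omega_\ke^n \;+\; (p-q)\int_{\reg X}\lvert\tau\rvert^2_{\omega_\ke}\,\omega_\ke^n \;\le\; 0 .
\]
As $p \ge q$, both summands are non-negative, so each vanishes. If $p = q$, then $\nabla^{1,0}\tau \equiv 0$, and together with $\nabla^{0,1}\tau = \delbar\tau = 0$ this gives $\nabla\tau \equiv 0$, i.e.\ $\tau$ is parallel for $\omega_\ke$. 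If $p > q$, then $\int_{\reg X}\lvert\tau\rvert^2_{\omega_\ke}\,\omega_\ke^n = 0$, hence $\tau \equiv 0$. In summary, the analytic skeleton is routine; the real weight of the proof rests on the uniform bound \emph{(b)} for holomorphic tensors against $\omega_\ke$ near the klt singularities of $X$.
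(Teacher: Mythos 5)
Your analytic skeleton (the pointwise Bochner identity on $\reg X$, cut-offs, integration by parts) is the classical argument, and you correctly identify that everything hinges on the two inputs \emph{(a)} and \emph{(b)}. The problem is that those two inputs are not side conditions to be ``granted'': they are the entire difficulty, and neither is available. Both would require quantitative control of $\omega_\ke$ near $\sing X$ --- for \emph{(a)} an upper bound on $\omega_\ke$ against a reference metric strong enough to produce cut-offs of small energy, and for \emph{(b)} a \emph{two-sided} comparison $c\,\omega_{\mathrm{ref}} \le \omega_\ke \le C\,\omega_{\mathrm{ref}}$ near the singular set. For the singular K\"ahler--Einstein metric on a general klt variety the only known global information is boundedness of the local potentials (\cite{EGZ09}) and smooth convergence of approximating metrics on compact subsets of the regular locus (\cite{BermanGuenancia14}); no such metric comparisons are known, and obtaining them is a major open problem. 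Your heuristic for \emph{(b)} --- that $p \ge q$ makes the contravariant slots ``counterbalance'' the covariant ones --- is moreover incorrect even for $p = q$: if the eigenvalues of $\omega_\ke$ relative to an algebraic frame tend to $0$ in one direction and to $\infty$ in another (which cannot be ruled out), then a tensor such as $\partial_1 \otimes \d z_2$ has unbounded norm. So the step ``$\lvert\tau\rvert_{\omega_\ke}$ is bounded, hence the boundary term vanishes'' would fail.

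The paper's proof is built precisely to avoid both obstacles, following \cite{BochnerCGGN}. One passes to a log resolution $\pi \from Y \to X$, extends $\tau$ to a section $\sigma$ of $\sE_Y(kF)$, and replaces $\omega_\ke$ by a two-parameter family of genuine K\"ahler metrics $\omega_{t,\eps}$ on the compact manifold $Y$ solving a twisted Einstein equation $\Ric\omega_{t,\eps} = -\omega_{t,\eps} + t\omega_A - \sum a_j\theta_{j,\eps}$; these converge to $\pi^*\omega_\ke$ locally smoothly on $Y \setminus F$. The integral inequality is then run with the normalized quantities $\lvert\sigma\rvert^2/(\lvert\sigma\rvert^2+1) \le 1$ and $\lvert D\sigma\rvert^2/(\lvert\sigma\rvert^2+1)^2$ in place of $\lvert\tau\rvert^2$ and $\lvert\nabla\tau\rvert^2$, which makes every term integrable with no a priori bound on $\lvert\sigma\rvert$; the klt condition enters only through the discrepancies $a_j > -1$, which make the error terms coming from the twist vanish in the limit $\eps, t \to 0$. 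If you want to salvage your direct approach, you would have to prove \emph{(a)} and \emph{(b)}, and for \emph{(b)} there is currently no route; I would recommend restructuring along the resolution-and-approximation strategy instead.
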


\begin{rem-plain}
The $p > q$ case of \cref{bochner main} is a direct consequence of the known semi\-stability of $\T X$ with respect to $K_X$~\cite[Thm.~A]{Guenancia16}.
However in our applications we mostly need the $p = q$ case.
\end{rem-plain}

There are two versions of the Bochner principle for klt spaces in the literature, both of which deal with the case where $K_X$ is numerically trivial:~\cite[Thm.~A]{GGK19} and~\cite[Thm.~A]{BochnerCGGN}.
In the first paper, the parallelity statement is first established for \emph{subbundles} of a given tensor bundle, then the corresponding statement for \emph{sections} is deduced.
This relies on two important facts about the holonomy group of $(\reg X, \omega_\ke)$ in the Ricci-flat case: that it is compact, and that its identity component is semisimple.
In our setting, the first of these is not known, while the second one is actually false: for example, the holonomy of the unit ball $\bB^n$ is $\U n$ and that of the polydisc is ${\U1}^n$, neither of which is semisimple.
Therefore, our proof of \cref{bochner main} follows the general strategy of~\cite{BochnerCGGN} instead, with extra steps due to the more complicated form of the Ricci tensor in our case.
We believe that the Bochner principle for subbundles could be established using the techniques from~\cite{GGK19}, however we do not need this statement.

\subsection*{Applications}

In~\cite[p.~423]{CataneseDiScala13}, Catanese and Di Scala asked whether it is possible to remove the assumption that $K_X$ is ample from their uniformization result, replacing it by the condition that $X$ be of general type.
Using \cref{main1}, we show that this is indeed the case.

\begin{cor}[Uniformization of canonical models] \label{cor gen type}
Let $X$ be an $n$-dimensional projective manifold of general type.
Assume that $X$ admits a semispecial tensor~$\psi$ such that for some point $p \in X$, we have $\psi_p \ne 0$ as a polynomial on $T_p X$ and the scheme-theoretic hypersurface $\set{ \psi_p = 0 } \subset \bP(T_p X)$ is reduced.
Then the canonical model $X_\canmod$ of $X$ is a polydisc quotient $\factor{\H n}\Gamma$ in the sense of~\lref{main1.1}.
\end{cor}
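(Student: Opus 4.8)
The plan is to reduce the statement to \Cref{main1} by transporting the semispecial tensor $\psi$ from $X$ to its canonical model $X_\canmod$. Write $\phi \from X \dashrightarrow X_\canmod$ for the canonical map; it is a birational morphism on a common resolution, and there is a big open subset $U \subset X$ (with $\codim{X}{X \setminus U} \ge 2$) on which $\phi$ restricts to an isomorphism onto an open subset $V \subset X_\canmod$ that meets $\reg{X_\canmod}$. First I would recall that a semispecial tensor, being a section of $\T{}^{\tensor n} \tensor (\can{}^{\tensor 2})$ over the smooth locus (by \Cref{def semispecial}), is in particular a reflexive tensor; since $X$ is smooth and the indeterminacy locus of $\phi$ has codimension at least two, the tensor $\psi$ restricts to $U$ and, via the isomorphism $U \isom V$, yields a tensor on $V$ which then extends uniquely to a reflexive — hence semispecial, once one checks the defining symmetry and the $\can{}^{\tensor 2}$-valuedness are preserved — tensor $\psi_\canmod$ on all of $X_\canmod$. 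The point $p \in X$ with $\psi_p \ne 0$ and $\set{\psi_p = 0}$ reduced can be chosen inside $U$ (this is an open, nonempty condition on $X$, so it is not destroyed by removing a codimension-two set), and its image $q \in V \cap \reg{X_\canmod}$ then inherits the same two properties because $\phi$ induces a linear isomorphism $T_p X \bij T_q X_\canmod$ under which $\psi_p$ and $(\psi_\canmod)_q$ correspond. With $X_\canmod$ having klt (even canonical) singularities and ample canonical divisor by~\cite{BCHM10}, \Cref{main1} applies and gives $X_\canmod \isom \factor{\H n}{\Gamma}$ as in~\lref{main1.1}.

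The two small technical points to be verified are: (i) that the notion of semispecial tensor is insensitive to passing between $X$ and $X_\canmod$ across a codimension-two locus — this is just the statement that reflexive sheaves and their sections are determined in codimension one, combined with the observation that both the symmetry condition defining ``special'' and the identification of the coefficient bundle with $\can{}^{[2]}$ are checked pointwise on the smooth locus, where nothing changes; and (ii) that ``$\psi_p \ne 0$ with reduced zero hypersurface'' is a Zariski-open condition on the locus of $X$ where $\psi$ is defined and nonzero, so it survives restriction to the big open set $U$. Neither is deep.

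The main obstacle — such as it is — is purely bookkeeping: making sure the open set $U$ can simultaneously be taken to (a) avoid the indeterminacy locus of $\phi$, (b) contain a point where $\psi$ has the prescribed behaviour, and (c) map into the smooth locus of $X_\canmod$ (so that \lref{main1.3} can be invoked with a point $q \in \reg{X_\canmod}$). Requirement (c) is the only one needing a word of justification: $\reg{X_\canmod}$ is a dense open subset, $\phi\inv(\reg{X_\canmod})$ is then open and dense in $X$, and intersecting it with the (open, dense) locus where $\psi$ is nonzero with reduced zero scheme and with the (open, dense) isomorphism locus of $\phi$ yields a nonempty open set from which to pick $p$. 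Once $p$ and $q$ are fixed, the conclusion is immediate from \Cref{main1}.
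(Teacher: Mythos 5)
Your proposal takes the same route as the paper: push $\psi$ forward along the birational contraction $\phi \from X \dashrightarrow X_\canmod$ provided by~\cite{BCHM10}, note that $X_\canmod$ is klt with ample canonical divisor, move the distinguished point into the isomorphism locus of $\phi$, and invoke \cref{main1}. The one substantive gap is precisely the step you wave off as ``not deep'': the assertion that
\[ U \defn \set{ x \in X \mid \psi_x \ne 0 \text{ and } Z(\psi_x) \subset \bP(T_x X) \text{ is reduced} } \]
is Zariski-open. This is the crux of the paper's proof (isolated there as \cref{reduced is open}, with two proofs given), and your write-up supplies no argument for it. It is true and elementary, but it must be proved: for instance, a degree-$n$ form is non-reduced iff it lies in the image of one of the maps $P_d \x P_{n-2d} \to P_n$, $(g,h) \mapsto g^2 h$, for $1 \le d \le \lfloor n/2 \rfloor$ (with $P_e$ the projective space of degree-$e$ forms); these maps are proper, so their images are closed, the reduced locus is open, and $U$ is its preimage under $x \mapsto [\psi_x]$ on the locus $\set{\psi_x \ne 0}$. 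Without this, the claim that $p$ may be replaced by a point in the isomorphism locus is unsupported.

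Two further inaccuracies, neither fatal. First, the isomorphism locus $U' \subset X$ of $\phi$ is in general \emph{not} big in $X$: the canonical map typically contracts divisors, so $X \setminus U'$ may have codimension-one components. What is true is that $\phi\inv$ contracts no divisors, so $\phi(U')$ is big in $X_\canmod$ --- and that is the direction needed to extend $\psi|_{U'}$ reflexively to a semispecial tensor on $X_\canmod$. Your use of bigness of $U'$ to relocate $p$ is unnecessary anyway: once openness of $U$ is known, $U$ and $U'$ are both nonempty Zariski-open in the irreducible variety $X$, hence meet. Second, a semispecial tensor is a section of $\Sym^n(\Omegap X1)(-K_X) [\tensor] \eta$ with $\eta$ two-torsion reflexive of rank one, not of $\T X^{\tensor n} \tensor \omega_X^{\tensor 2}$; the transport across the contraction still works (the pushforward of $\eta$ is again two-torsion and reflexive), but the sheaf you name is not the right one.
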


\begin{rem-plain}
In the above situation, it would be natural to make a statement about the universal cover $\wt X$ of $X$ itself, such as ``$\wt X$ is bimeromorphic to the polydisc''.
We only have quite weak statements in this direction, cf.~\cref{rem unif}.
\end{rem-plain}

In~\cite{Kazhdan71}, Kazhdan proved that if $X$ is an arithmetic variety (the quotient of a Hermitian symmetric space by a torsion-free arithmetic group), then for any field automorphism $\sigma \in \Aut(\C / \Q)$, the conjugate variety $X^\sigma$ is again arithmetic.
Simpson observed that his uniformization theorem yields corollaries in the style of Kazhdan's result~\cite[Cor.~9.5]{Simpson88}.
In our situation, we have the following.

\begin{cor}[Conjugates of polydisc quotients] \label{cor conjugate}
Let $X$ be a projective variety that is a quotient of the polydisc in the sense of~\lref{main1.1}.
Then for any automorphism $\sigma \in \Aut(\C / \Q)$, the conjugate variety $X^\sigma$ is again a polydisc quotient in that sense.
\end{cor}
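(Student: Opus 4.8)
The plan is to reduce the statement to the algebraic characterization of polydisc quotients in \cref{main1}. That theorem says that, for an $n$-dimensional normal projective variety with klt singularities and ample canonical divisor, property~\lref{main1.1} is equivalent to the existence of a semispecial tensor with reduced hypersurface as in~\lref{main1.3}; crucially, all of these conditions -- including the standing hypotheses -- are phrased purely in terms of the scheme structure of $X$ and of coherent sheaves on it. The conjugation $X \mapsto X^\sigma \defn X \times_{\Spec\C,\,\sigma} \Spec\C$ fits into a cartesian square over the isomorphism $\Spec\sigma\from \Spec\C \to \Spec\C$; hence, even for a discontinuous $\sigma$, it is an isomorphism of bare schemes, so it is exact on quasi-coherent sheaves and compatible with tensor and symmetric powers, duals, reflexive hulls, and global sections. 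I would therefore argue that $X^\sigma$ again satisfies the standing hypotheses and condition~\lref{main1.3}, and then apply \cref{main1} to $X^\sigma$. This is the same mechanism underlying Kazhdan's theorem and Simpson's \cite[Cor.~9.5]{Simpson88}.

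First I would check the standing hypotheses. That $X^\sigma$ is again $n$-dimensional, normal and projective is immediate, since dimension, normality and projectivity are all preserved. The canonical sheaf is intrinsic, so $\can{X^\sigma} \isom (\can X)^\sigma$; consequently $K_{X^\sigma}$ is $\Q$-Cartier (a reflexive power of $\can{X^\sigma}$ is the conjugate of an invertible sheaf, hence invertible) and ample (ampleness of a $\Q$-Cartier divisor on a projective variety is algebraic -- e.g.\ via Nakai--Moishezon and the invariance of intersection numbers, or because some reflexive power is very ample and a very ample sheaf stays very ample under conjugation). For kltness, I would conjugate a log resolution $f\from Y \to X$ to a log resolution $f^\sigma\from Y^\sigma \to X^\sigma$; since $\omega_{Y^\sigma/X^\sigma}$ is the conjugate of $\omega_{Y/X}$, the discrepancies of $X^\sigma$ along the $f^\sigma$-exceptional divisors equal those of $X$ and hence remain $> -1$.

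Next I would transport condition~\lref{main1.3}. Because $X$ is a polydisc quotient, \cref{main1} gives a semispecial tensor $\psi$ on $X$ with $\set{\psi_p = 0} \subset \bP(T_p X)$ reduced for some $p \in \reg X$. By \cref{def semispecial}, $\psi$ is a nonzero global section of a reflexive sheaf built functorially from $\Omegar{X}{1}$, $\can X$ and a $2$-torsion line bundle (subject, possibly, to closed conditions singling out the ``special'' sections). Applying $(-)^\sigma$ produces the analogous sheaf on $X^\sigma$ -- note that the conjugate of a $2$-torsion class is again $2$-torsion, and any pointwise nondegeneracy or symmetry condition on $\psi$ is cut out by equations whose coefficients are simply transported by $\sigma$ -- and carries $\psi$ to a nonzero section $\psi^\sigma$, i.e.\ a semispecial tensor on $X^\sigma$. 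Setting $q \defn p^\sigma \in (\reg X)^\sigma = \reg{X^\sigma}$, one has $T_q(X^\sigma) \isom (T_p X)^\sigma$ and $(\psi^\sigma)_q = (\psi_p)^\sigma$, so $\set{(\psi^\sigma)_q = 0} \subset \bP\!\left( T_q(X^\sigma) \right)$ is the $\sigma$-conjugate of the reduced scheme $\set{\psi_p = 0}$, hence reduced -- reducedness of a scheme of finite type over $\C$ survives conjugation, as the conjugate $\cO \otimes_{\C,\sigma}\C$ of a local ring $\cO$ is isomorphic to $\cO$ as an abstract ring. Thus $X^\sigma$ satisfies~\lref{main1.3}, and \cref{main1}, applied this time to $X^\sigma$, shows it is a polydisc quotient in the sense of~\lref{main1.1}.

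I do not expect a genuine obstacle: the real content is the appeal to \cref{main1}, which replaces the transcendental description of a polydisc quotient (universal cover, \kahler--Einstein metric, holonomy) by an algebro-geometric one, and it is precisely this that allows a wildly discontinuous $\sigma$ to be handled. The only points deserving a line of justification are the compatibility of $(-)^\sigma$ with reflexive hulls and with the torsion twist entering the definition of a semispecial tensor, the invariance of discrepancies, and the preservation of scheme-theoretic reducedness -- all of which are routine.
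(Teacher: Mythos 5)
Your proposal is correct and follows exactly the paper's argument: reduce to the algebraic characterization of \cref{main1'} and observe that klt singularities, ampleness of $K_X$, and the existence of a semispecial tensor with reduced hypersurface are all invariant under conjugation by $\sigma$ (the paper delegates the detailed verifications you spell out to~\cite[Sec.~5]{CataneseDiScala13}). No issues.
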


Finally, we remark that in low dimensions, the existence of a semispecial tensor (without the reducedness hypothesis) is sufficient to characterize quotients of the polydisc.
This is based on an argument of Catanese and Franciosi~\cite[Thm.~1.9]{CataneseFranciosi09}.

\begin{cor}[Three-dimensional polydisc quotients] \label{cor dim 3}
Let $X$ be a normal projective variety with klt singularities and ample canonical divisor $K_X$, of dimension $n \le 3$.
Then, the following are equivalent:
\begin{enumerate}
\item\label{dim3.1} We have $X \isom \factor{\H n}\Gamma$, where $\Gamma \subset \Aut(\H n)$ is a discrete cocompact subgroup whose action is free in codimension one.
\item\label{dim3.2} There is a semispecial tensor $\psi$ on $X$.
\end{enumerate}
\end{cor}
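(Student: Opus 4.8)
The plan is to reduce to \cref{main1}, the extra input being that in dimension at most three a semispecial tensor automatically has reduced hypersurface at a regular point. The implication $\lref{dim3.1}\Rightarrow\lref{dim3.2}$ is immediate, since a variety as in \lref{dim3.1} carries, by \lref{main1.3}, a semispecial tensor (with reduced hypersurface, which we simply discard). For the converse, let $\psi$ be a semispecial tensor on $X$ (\cref{def semispecial}); by \cref{main1} it suffices to exhibit a point $p\in\reg X$ at which $\set{\psi_p=0}\subset\bP(T_pX)$ is reduced. The starting point is \cref{bochner main}, which shows that $\psi$ is parallel with respect to $\omega_\ke$.

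I would then pass to the restricted holonomy group $H\subset\U{T_pX}$ of $(\reg X,\omega_\ke)$ and its de Rham decomposition $T_pX=\bigoplus_{i=1}^kW_i$ into $H$-irreducible summands, corresponding to a splitting of the universal cover of $\reg X$ into \kahler--Einstein factors of negative Einstein constant; this is the singular holonomy machinery that already underlies \cref{main1} (cf.\ \cite{GGK19,BochnerCGGN}). Since the Ricci curvature is negative, Berger's classification says that each factor has restricted holonomy either $\U{d_i}$, where $d_i=\dim_\C W_i$, or the isotropy representation of an irreducible bounded symmetric domain of dimension $d_i$. Writing $\Omega^1_{\reg X}=\bigoplus_iW_i^*$ and $\omega_{\reg X}^{-1}=\bigotimes_i\Lambda^{d_i}W_i$, the decomposition of $\Sym^n\Omega^1_{\reg X}\otimes\omega_{\reg X}^{-1}$ over the factors shows that $\psi\neq0$ produces a nonzero $H$-invariant vector in one of its summands; here the $2$-torsion twist occurring in the definition of a semispecial tensor plays no role, since the restricted holonomy acts trivially on it.

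The heart of the matter is a finite representation-theoretic check. If some $d_i\geq2$, then since $n\leq3$ we are, after reordering, in one of the cases $k=1$ with $d_1\in\set{2,3}$, or $k=2$ with $(d_1,d_2)=(2,1)$; the one-dimensional factors contribute trivially, so the only summand that can carry an invariant is $\Sym^{d_1}(W_1^*)\otimes\Lambda^{d_1}W_1$. Running through the finitely many entries of Berger's list in dimensions $2$ and $3$ --- namely $(\U2,\C^2)$, $(\U3,\C^3)$ and $(\U2,\Sym^2\C^2)$ --- one verifies that this module has no nonzero invariant: in the first case it is the three-dimensional irreducible $\SU2$-module; in the second it reduces to the irreducibility of $\Sym^3$ of the standard $\SL3\C$-module; in the third it reduces to the plethysm $\Sym^3(\Sym^2\C^2)\cong\Sym^6\C^2\oplus\Sym^2\C^2$ having no $\SL2\C$-invariant. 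This contradicts $\psi\neq0$, so $d_i=1$ for every $i$, and the universal cover of $\reg X$ is a polydisc. The semispecial tensor naturally associated to this polydisc structure --- constructed exactly as in the relevant step of the proof of \cref{main1}, up to a $2$-torsion twist --- then restricts at $p$ to a product of $n$ linearly independent linear forms, so its hypersurface at $p$ is reduced, and \cref{main1} yields \lref{dim3.1}.

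I expect the genuine obstacle to be not the representation theory --- elementary once organized as above, and in essence the computation behind \cite[Thm.~1.9]{CataneseFranciosi09} --- but making sure that the holonomy and de Rham machinery, including the identification of the de Rham factors with \kahler--Einstein manifolds whose restricted holonomy is governed by Berger's list, is legitimately available over the (possibly incomplete) metric $\omega_\ke$ on $\reg X$. That infrastructure, however, is exactly what is developed for \cref{main1} and \cref{bochner main}, so in practice the corollary should follow quickly.
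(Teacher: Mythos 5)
Your argument is correct, and for the decisive step it takes a genuinely different route from the paper. Both proofs reduce to showing that the restricted holonomy is $H^\circ={\U1}^n$, after which the implication \lref{main1'.3}$\imp$\lref{main1'.1} of \cref{main1'} goes through unchanged; this is exactly how the paper phrases it, and your closing manoeuvre of manufacturing a fresh semispecial tensor with reduced hypersurface from the parallel splitting $\T{\reg X}=\bigoplus L_i$ also works but is an unnecessary detour (and the aside that ``the universal cover of $\reg X$ is a polydisc'' should be dropped, since $\reg X$ is incomplete --- the statement you actually use and prove is about restricted holonomy). Where you differ is in \emph{how} $H^\circ={\U1}^n$ is obtained. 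The paper follows the case analysis of \cite[Thm.~1.9]{CataneseFranciosi09} on the plane cubic $\set{\psi_p=0}$, importing cases (a)--(f) verbatim and supplying a new, global argument only for the remaining case of a triple line $3L$: the Weil divisorial subsheaf $\sL\subset\Omegar X1$ determined by $L$ would satisfy $\O X(K_X)[\tensor]\eta\subset\sL^{[3]}$, hence be big, contradicting Bogomolov--Sommese vanishing \cite[Cor.~1.3]{Gra12}. You instead run a self-contained local computation: the de Rham/Berger dichotomy for each irreducible holonomy factor (full $\U{d_i}$ or Hermitian symmetric isotropy), a central-character count showing that the only summand of $\Sym^nV^*\tensor\Lambda^nV$ that could carry an invariant is $\Sym^{d_i}(W_i^*)\tensor\Lambda^{d_i}W_i$, and the three representation-theoretic checks in dimensions $2$ and $3$, all of which are correct (in particular $\Sym^3(\Sym^2\C^2)\isom\Sym^6\C^2\oplus\Sym^2\C^2$ indeed has no $\SL2\C$-invariant). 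Your worry about the legitimacy of this machinery for the incomplete metric $\omega_\ke$ is unfounded: only the \emph{local} de Rham splitting and the classification of \emph{restricted} holonomy are needed, precisely as in \cref{setup holonomy} and \cref{Hol0 polydisc}, which already invoke \cite[Cor.~10.41 and Thm.~10.38]{Besse87}. What each approach buys: yours is uniform in $n\le3$, avoids both the external case analysis of \cite{CataneseFranciosi09} and the appeal to \cite{Gra12}, and in effect re-derives all of cases (a)--(g) at once; the paper's is shorter on the page and isolates the single genuinely new phenomenon --- the non-reduced triple line --- by a global algebro-geometric argument that never needs Berger's list to be written out.
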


\subsection*{Acknowledgements}

We would like to thank Henri Guenancia for helpful discussions.
The first author was funded by the Deutsche Forschungsgemeinschaft (DFG, German Research Foundation) -- Projektnummer 521356266.
The second author was supported by the Deutsche Forschungsgemeinschaft (DFG, German Research Foundation) -- Project ID 286237555 (TRR 195) and Project ID 530132094.

\section{Notation and conventions}

We work over the field of complex numbers \C.
For the reader's convenience, we repeat the definition of klt singularities.

\begin{dfn}[Klt singularities, cf.~\protect{\cite[Cor.~2.32 and Def.~2.34]{KM98}}]
Let $(X, \Delta)$ be a pair, where $X$ is a normal variety and $\Delta = \sum_i a_i \Delta_i$ is a divisor on $X$, where the $\Delta_i$ are distinct prime divisors and $a_i \in \Q \cap [0, 1)$ for all $i$.
Assume moreover that $K_X + \Delta$ is \Q-Cartier.
Fix a log resolution $f \from Y \to X$ of the pair $(X, \Delta)$ and write
\[ K_Y + f\inv_* \Delta \; \sim_\Q \; f^*( K_X + \Delta ) + \sum_i a(E_i, X, \Delta) E_i \]
where the $E_i$ are the irreducible exceptional divisors of $f$.
The pair $(X, \Delta)$ is said to have \emph{Kawamata log terminal (klt) singularities} if
\[ a(E_i, X, \Delta) > -1 \quad \text{for all $i$.} \]
We say that $X$ has klt singularities if the pair $(X, \emptyset)$ has klt singularities.
\end{dfn}

\begin{ntn} \label{refl tensor}
The \emph{reflexive tensor product} of two coherent sheaves $\sF$ and $\sG$ on a normal variety $X$ is
\[ \sF [\tensor] \sG \defn (\sF \tensor \sG)\ddual, \]
where $(-)\ddual$ denotes the double dual.
If $\sG = \O X(D)$ is a Weil divisorial sheaf (i.e.~a reflexive coherent sheaf of rank one), we set
\[ \sF(D) \defn \sF [\tensor] \O X(D). \]
If $f \from Y \to X$ is a morphism between normal varieties, the \emph{reflexive pullback} of $\sF$ is
\[ f^{[*]} \sF \defn (f^* \sF)\ddual. \]
We will use this notation also when pulling back local or global sections of $\sF$.
\end{ntn}

For the reader's convenience, we include a notion from linear algebra that will be used in the proof of the Bochner principle.

\begin{dfn}
Let $V$ be a finite-dimensional Hermitian vector space.
A self-adjoint endomorphism $f \from V \to V$ is called \emph{positive} if $\la f(v), v \ra > 0$ for all $v \ne 0$.
It is called \emph{semipositive} if $\la f(v), v \ra \ge 0$ for all $v \in V$.
If $f$ is (semi)positive, we write $f > 0$ ($f \ge 0$).
\end{dfn}

\begin{lem} \label{lem semipos}
If $f \from V \to V$ is semipositive, then $f \le \tr(f) \cdot \id_V$.
\end{lem}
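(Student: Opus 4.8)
The plan is to diagonalize $f$ and reduce the inequality to a trivial statement about nonnegative real numbers. Since $f$ is self-adjoint, the spectral theorem provides an orthonormal basis $e_1, \dots, e_n$ of $V$ consisting of eigenvectors, say $f(e_i) = \lambda_i e_i$. The semipositivity hypothesis applied to $v = e_i$ gives $\lambda_i = \la f(e_i), e_i \ra \ge 0$ for every $i$, and by definition $\tr(f) = \sum_{j=1}^n \lambda_j$.

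Next I would observe that the endomorphism $g \defn \tr(f) \cdot \id_V - f$ is self-adjoint (being a difference of self-adjoint operators) and is diagonalized by the same basis, with eigenvalues $\tr(f) - \lambda_i = \sum_{j \ne i} \lambda_j$. As each $\lambda_j \ge 0$, every eigenvalue of $g$ is $\ge 0$. Finally, writing an arbitrary $v \in V$ as $v = \sum_i a_i e_i$, one computes $\la g(v), v \ra = \sum_i (\tr(f) - \lambda_i)\,\lvert a_i \rvert^2 \ge 0$, which is exactly the assertion $f \le \tr(f) \cdot \id_V$.

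I do not expect any real obstacle here: the only point to be careful about is that ``semipositive'' is defined with a non-strict inequality, so one cannot hope for the strict bound $f < \tr(f)\cdot\id_V$ (e.g.\ $f = 0$, or $f$ of rank one on a one-dimensional space), and the argument above indeed only yields the non-strict conclusion. The statement and proof go through verbatim for a complex Hermitian $V$ or a real Euclidean $V$.
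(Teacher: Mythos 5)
Your argument is correct and is essentially the paper's own proof: both diagonalize $f$ via the spectral theorem, note that the eigenvalues are real and non-negative, and reduce the inequality to the trivial estimate $\tr(f) - \lambda_i = \sum_{j \ne i} \lambda_j \ge 0$. You merely spell out the final step that the paper dismisses as obvious.
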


\begin{proof}
By the spectral theorem, $f$ is diagonalizable.
After choosing a basis of eigenvectors, we may assume that $f$ is given by a diagonal matrix.
Since the eigenvalues of $f$ are real and non-negative, the claim then becomes obvious.
\end{proof}

\section{The Bochner principle for klt spaces}

In this section, we formulate a more precise version of \cref{bochner main} and prove the relevant corollary about semispecial tensors.
Throughout, $X$~will denote an $n$-dimensional normal projective variety with klt singularities and ample canonical divisor $K_X$.

As already mentioned in the introduction, the following notion goes back to~\cite{CataneseDiScala13} if $X$ is smooth.
Our definition is the natural extension of theirs to normal varieties.

\begin{dfn} \label{def semispecial}
Let $X$ be as above.
\begin{enumerate}
\item A \emph{slope zero tensor} is a nonzero section
\[ 0 \ne \psi \in \HH0.X.\Sym^{mn}(\Omegap X1)(-mK_X). \]
for some positive integer $m > 0$.
\item A \emph{semispecial tensor} is a nonzero section
\[ 0 \ne \psi \in \HH0.X.\Sym^{n}(\Omegap X1)(-K_X) [\tensor] \eta. \]
for some rank one reflexive sheaf $\eta$ on $X$ such that $\eta^{[2]} \isom \O X$.
\end{enumerate}
\end{dfn}

\begin{rem-plain}
Note that the sheaves appearing above are by definition reflexive, cf.~\cref{refl tensor}.
Also, a Chern class calculation shows that $\cc1{\Sym^{mn}(\Omegap X1)(-mK_X)} = 0$, which may help to explain the terminology.
\end{rem-plain}

\begin{setup} \label{bochner setup}
Let $X$ be as above, and let $p, q \ge 0$ be non-negative integers.
Consider the reflexive sheaf
\[ \sE \defn \T X^{\tensor p} [\tensor] \Omegap X{\tensor q}. \]
According to~\cite[Theorem~7.8]{EGZ09} and~\cite[Thm.~A, Cor.~5.7]{BermanGuenancia14}, there exists a unique closed, positive current $\omega_\ke \in [K_X] \in \HH2.X.\R.$ with bounded potentials, smooth on $\reg X$ and satisfying $\Ric(\omega_\ke) = - \omega_\ke$ on this locus.
The \kahler metric $\omega_\ke$ on $\reg X$ induces a smooth hermitian metric on $\T{\reg X}$ and in turn on $\sE \big|_{\reg X}$, with Chern connection~$D_{\sE}$.
\end{setup}

\begin{thm}[Bochner principle] \label{bochner}
Let $(X, \omega_\ke)$ and $\sE$ be as in \cref{bochner setup}.
Consider a holomorphic tensor
\[ \tau \in \HH0.X.\sE. = \HH0.\reg X.\T{\reg X}^{\tensor p} \tensor \Omega_{\reg X}^{\tensor q}.. \]
\begin{enumerate}
\item\label{bochner.2} If $p = q$, then $\tau$ is parallel with respect to $\omega_\ke$, i.e.~$D_{\sE} \tau = 0$ on $\reg X$.
\item\label{bochner.1} If $p > q$, then $\tau = 0$.
\end{enumerate}
\end{thm}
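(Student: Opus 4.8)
The plan is to run the classical Bochner--Kodaira argument on the open part $\reg X$, where everything is smooth, and then to control the behaviour near $\sing X$ using the boundedness of the \kahler--Einstein potentials together with the klt hypothesis. First I would recall the computation of the curvature of the Chern connection $D_\sE$ on $\sE\big|_{\reg X}$ in terms of $\Ric(\omega_\ke)$: since $\omega_\ke$ induces the metric on $\T{\reg X}$, the curvature of $\T{\reg X}$ contracts (via the obvious trace) to $\Ric(\omega_\ke) = -\omega_\ke$, and hence the induced curvature on the tensor bundle $\sE = \T X^{\tensor p}[\tensor]\Omegap X{\tensor q}$ contracts to $(q-p)\,\omega_\ke\cdot\id_\sE$ plus a correction term coming from the full curvature tensor. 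The key point is to isolate, via \cref{lem semipos}, the part of the curvature operator that is manifestly semipositive (this is where the trace-versus-identity comparison enters, playing the role of the "semisimplicity" step that is unavailable here), so that the Weitzenböck-type identity reads, schematically, $\Delta_{\delbar}\tau = \bar\del^*\bar\del\tau + (\text{semipositive operator})\tau + (p-q)\tau$ when paired suitably.

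Next I would integrate the Weitzenböck identity against $\tau$ over $\reg X$. The obstruction is that $\reg X$ is non-compact, so the integration by parts producing the vanishing of $\int \la \bar\del^*\bar\del\tau,\tau\ra$ is not automatic; one needs a cut-off argument near $\sing X$. Here the inputs are: (i) $\tau$ extends holomorphically across $\sing X$ by reflexivity, so $|\tau|_{\omega_\ke}$ is bounded on compact subsets of $X$ away from $\sing X$ and has at worst controlled growth — in fact, because the potentials of $\omega_\ke$ are bounded and $X$ is klt, the $L^2$-norm $\int_{\reg X}|\tau|^2_{\omega_\ke}\,\omega_\ke^n$ is finite; (ii) the volume of a tubular neighbourhood of $\sing X$ with respect to $\omega_\ke^n$ goes to zero, again by boundedness of potentials and $\codim_X\sing X\ge 2$. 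Choosing Demailly-type cut-off functions $\chi_\eps$ supported away from $\sing X$ with $\int|\d\chi_\eps|^2_{\omega_\ke}\,\omega_\ke^n\to 0$, the cross terms in the integrated identity disappear in the limit and one obtains
\[ \int_{\reg X}\big( |\bar\del\tau|^2 + \la(\text{semipos})\tau,\tau\ra + (p-q)|\tau|^2\big)\,\omega_\ke^n \le 0. \]
Since $\tau$ is holomorphic, $\bar\del\tau=0$ does not immediately help — rather, one uses the conjugate Weitzenböck identity (for $D^{1,0}$, using holomorphy) so that the Laplacian term is a genuine gradient $|D\tau|^2\ge 0$; I would set this up carefully so that the sign works out. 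Then: if $p>q$ the term $(p-q)|\tau|^2$ is strictly positive wherever $\tau\ne 0$, forcing $\tau\equiv 0$, which is part~\lref{bochner.1}; if $p=q$ the inequality forces both $D\tau=0$ (i.e.~$\tau$ is parallel, part~\lref{bochner.2}) and the vanishing of the semipositive curvature term on $\tau$.

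The main obstacle I expect is the non-compactness / boundary analysis: making the cut-off argument rigorous requires knowing that $\tau$ and its first covariant derivative are $L^2$ with respect to $\omega_\ke$, and that the integration by parts genuinely has vanishing boundary contribution. This is exactly where the structure of $\omega_\ke$ as a current with bounded potentials, and the klt condition (which bounds $\int (\text{local generator of }\omega_X)$), must be used — precisely as in the argument of~\cite{BochnerCGGN}. The extra complication compared to the Ricci-flat case of~\cite{BochnerCGGN} is the non-trivial Ricci term $-\omega_\ke$, which is why the curvature operator has the form "$(p-q)\,\omega_\ke + \text{semipositive}$" rather than just "semipositive"; keeping track of the non-negative pieces via \cref{lem semipos}, and checking that the sign of $(p-q)$ is the one that makes the $p>q$ case collapse, is the technical heart of the computation. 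Once the integral inequality is in place, the two conclusions follow immediately as indicated. Finally, for the corollary about semispecial tensors one applies part~\lref{bochner.1} with $(p,q)$ chosen so that a semispecial tensor, viewed after the double cover trivialising $\eta$, lives in a bundle with $p>q$ — but this is deferred to the relevant later section.
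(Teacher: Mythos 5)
Your overall flavour is right (an integrated Bochner-type inequality in which the trace of the curvature of $\sE$ against the \kahler--Einstein metric produces the factor $q-p$, with the klt condition and bounded potentials as the essential inputs), but your route differs from the paper's and contains a genuine gap. The paper does \emph{not} work directly on $\reg X$ with cut-off functions. It passes to a log resolution $\pi \from Y \to X$, extends $\tau$ to a section $\sigma$ of $\sE_Y(kF)$, regularizes the metrics on the exceptional line bundles, and solves approximate twisted \kahler--Einstein equations $\Ric \omega_{t,\eps} = -\omega_{t,\eps} + t\omega_A - \sum a_j \theta_{j,\eps}$ on the \emph{compact} manifold $Y$ (Berman--Guenancia). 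The key inequality is
\[
0 \le \int_Y \frac{|D\sigma|^2}{(|\sigma|^2+1)^2} \wedge \omega_{t,\eps}^{n-1} \le \int_Y \frac{\la i\,\Theta(\sE_Y(kF))\sigma, \sigma\ra}{|\sigma|^2+1}\wedge \omega_{t,\eps}^{n-1},
\]
which comes from integrating $\ddc\log(|\sigma|^2+1)$ over a compact manifold; the normalization by $|\sigma|^2+1$ is precisely what makes every integrand bounded, so that \emph{no} $L^2$ control of $\sigma$ or $D\sigma$ is ever needed. One then lets $\eps \to 0$, then $t \to 0$. Note also that \cref{lem semipos} is used only to bound the perturbation term coming from $t\omega_A$, not a structural ``semipositive part'' of the curvature: on the nose, the trace of $i\,\Theta(\sE_Y)$ is $(q-p)\id_{\sE_Y}$ plus error terms supported by the approximation and the discrepancies $a_j$.

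The gap in your proposal is the assertion that $\int_{\reg X} |\tau|^2_{\omega_\ke}\,\omega_\ke^n < \infty$ (and, implicitly, that $|\tau|\,|D\tau|$ is integrable so that the cross terms with $\d\chi_\eps$ vanish). Boundedness of the potentials of $\omega_\ke$ controls $\omega_\ke^n$ but gives no two-sided comparison between $\omega_\ke$ and a fixed smooth reference metric near $\sing X$; since the fibre metric on $\T{\reg X}^{\tensor p} \tensor \Omega^{\tensor q}_{\reg X}$ involves both $\omega_\ke$ and its inverse, the pointwise norm $|\tau|_{\omega_\ke}$ may well blow up along $\sing X$, and its square-integrability is not known. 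This is exactly the difficulty that the resolution-plus-regularization scheme of~\cite{BochnerCGGN} (followed by the paper) is designed to circumvent; without an independent proof of these integrability statements, your cut-off argument does not close. If you want to keep a direct approach on $\reg X$, you would need to replace $|\tau|^2$ by $\log(|\tau|^2+1)$ or $\frac{|\tau|^2}{|\tau|^2+1}$ throughout and still justify the integration by parts against the singular metric, which in practice leads you back to the approximating metrics on $Y$.
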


\begin{cor} \label{bochner cor}
Let $(X, \omega_\ke)$ be as in \cref{bochner setup}.
\begin{enumerate}
\item\label{bochner cor.1} Any slope zero tensor $\psi$ on $X$ is parallel with respect to $\omega_\ke$.
\item\label{bochner cor.2} Any semispecial tensor $\psi$ on $X$ is parallel with respect to $\omega_\ke$ and the flat metric on $\eta\big|_{\reg X}$.
\end{enumerate}
\end{cor}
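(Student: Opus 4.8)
The plan is to deduce both parts from \cref{bochner} in the case $p=q$. The obstruction is that the reflexive sheaves carrying slope zero and semispecial tensors are not literally of the form $\T X^{\tensor p} [\tensor] \Omegap X{\tensor q}$ occurring there: one meets symmetric and determinant powers of $\Omegap X1$ rather than plain tensor powers, and, in the semispecial case, an extra twist by the torsion sheaf $\eta$. I would handle the first point by multilinear algebra and the second by squaring. Throughout I will use that, $X$ being normal and the sheaves being reflexive, $\HH0.X.{\sF}.=\HH0.{\reg X}.{\sF\big|_{\reg X}}.$, so sections over $\reg X$ and over $X$ are interchangeable.

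The preliminary observation, over $\reg X$, is the following. For all $a,b\ge 0$, the symmetrization idempotent exhibits $\Sym^a(\Omegap{\reg X}1)$ as a direct summand of $(\Omegap{\reg X}1)^{\tensor a}$, and, since $\O{\reg X}(-K_X)\isom\wedge^n\T{\reg X}$, the antisymmetrization idempotent exhibits $\O{\reg X}(-bK_X)$ as a direct summand of $\T{\reg X}^{\tensor bn}$; hence any tensor product of such sheaves is a direct summand of a suitable $\T{\reg X}^{\tensor p}\tensor(\Omegap{\reg X}1)^{\tensor q}$. The key point is that the bundle maps involved — the (anti)symmetrization idempotents and the reordering isomorphisms needed to arrange the factors — are natural for the structure group $\GL n\C$ and are therefore parallel for the connections induced by $\omega_\ke$; moreover $\eta\big|_{\reg X}$, being $2$-torsion, carries a flat unitary metric (unique up to scale) for which a fixed isomorphism $\eta^{[2]}\isom\O X$ restricts to a parallel isometry $\eta^{\tensor 2}\isom\O{\reg X}$. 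Consequently the summands in question are \emph{parallel} direct summands, so that parallelity of a section of the ambient tensor bundle descends to the summand.

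For the first part, a slope zero tensor $\psi\in\HH0.X.{\Sym^{mn}(\Omegap X1)(-mK_X)}.$ restricts over $\reg X$ to a holomorphic section of a parallel direct summand of $\bigl(\T X^{\tensor mn}[\tensor]\Omegap X{\tensor mn}\bigr)\big|_{\reg X}$, hence gives a holomorphic tensor in $\HH0.X.{\T X^{\tensor mn}[\tensor]\Omegap X{\tensor mn}}.$; by \cref{bochner}, applied with $p=q=mn$, this tensor is parallel with respect to $\omega_\ke$, and since the ambient decomposition is parallel, so is $\psi$. For the second part, write $\sL\defn\Sym^n(\Omegap X1)(-K_X)$, so a semispecial tensor is a section $\psi\in\HH0.X.{\sL[\tensor]\eta}.$. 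Then $\psi\tensor\psi\in\HH0.X.{(\sL[\tensor]\eta)[\tensor](\sL[\tensor]\eta)}.=\HH0.X.{\sL[\tensor]\sL}.$, the last identification using $\eta^{[2]}\isom\O X$. As before, $\sL[\tensor]\sL$ restricts over $\reg X$ to a parallel direct summand of $\bigl(\T X^{\tensor 2n}[\tensor]\Omegap X{\tensor 2n}\bigr)\big|_{\reg X}$, so $\psi\tensor\psi$ yields a holomorphic tensor in $\HH0.X.{\T X^{\tensor 2n}[\tensor]\Omegap X{\tensor 2n}}.$; \cref{bochner} with $p=q=2n$ then shows $\psi\tensor\psi$ is parallel, i.e.\ $D(\psi\tensor\psi)=0$ on $\reg X$, where $D$ is the Chern connection of $\sL[\tensor]\sL$ — equivalently, using the flat metric on $\eta$, of $(\sL[\tensor]\eta)[\tensor](\sL[\tensor]\eta)$.

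It remains to pass from parallelity of $\psi\tensor\psi$ to that of $\psi$; this is the one step requiring care. Writing $D_\sF$ for the Chern connection on $\sF\defn\sL[\tensor]\eta$ (product of the metric induced by $\omega_\ke$ on $\sL$ and the flat metric on $\eta$), the Leibniz rule gives $D(\psi\tensor\psi)=\mu(D_\sF\psi)$, where $\mu\from\sF\to\sF\tensor\sF$ is the bundle map $w\mapsto w\tensor\psi+\psi\tensor w$. Wherever $\psi\ne 0$ this map is fibrewise injective: $w\tensor\psi+\psi\tensor w$ is twice the symmetric product $w\odot\psi$, which vanishes only for $w=0$. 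Since $\psi\ne 0$ and $\reg X$ is connected, the zero locus $\set{\psi=0}$ is a proper analytic subset, hence nowhere dense, so the smooth form $D_\sF\psi$, vanishing off this set, vanishes identically. Thus $\psi$ is parallel with respect to $\omega_\ke$ and the flat metric on $\eta\big|_{\reg X}$, which is the claim. Given \cref{bochner}, the only genuinely delicate points are this de-squaring argument and the verification that the natural idempotents, reorderings and the trivialization of $\eta^{[2]}$ are parallel; everything else is an immediate appeal to \cref{bochner}.
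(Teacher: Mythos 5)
Your treatment of part~\lref{bochner cor.1} coincides with the paper's: both realize $\Sym^{mn}(\Omegap{\reg X}1)(-mK_X)$ as a parallel direct summand of $\T{\reg X}^{\tensor mn}\tensor\Omegap{\reg X}{\tensor mn}$ via the (anti)symmetrization idempotents and invoke \cref{bochner} with $p=q=mn$. For part~\lref{bochner cor.2}, however, you take a genuinely different route. The paper passes to the \qe double cover $\phi\from X'\to X$ trivializing $\eta$: it checks that $X'$ is again klt with $K_{X'}=\phi^*K_X$ ample, uses the uniqueness statement of~\cite[Theorem~7.8]{EGZ09} to identify $\omega_{\ke,X'}=\phi^*\omega_\ke$, applies part~\lref{bochner cor.1} to the slope zero tensor $\phi^{[*]}\psi$ on $X'$, and descends. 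You instead stay on $X$, square the tensor to kill $\eta$ via $\eta^{[2]}\isom\O X$, apply \cref{bochner} with $p=q=2n$ to $\psi\tensor\psi$, and then de-square using the fibrewise injectivity of $w\mapsto w\tensor\psi+\psi\tensor\psi\mapsto w\odot\psi$ off the zero locus of $\psi$ together with density of that locus's complement. Both arguments are correct. The paper's version is shorter but leans on nontrivial external inputs (invariance of klt under \qe covers, uniqueness of the singular \kahler--Einstein current under pullback); yours avoids the cover entirely at the cost of doubling the tensor degree and supplying the (correct) pointwise de-squaring lemma, and it makes explicit the parallelity of the idempotents and of the trivialization of $\eta^{\tensor 2}$, which the paper leaves implicit. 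Note that the double cover is not thereby expelled from the story: the flat metric on $\eta\big|_{\reg X}$ in \cref{flat eta} is itself defined by descent from $X'$, so your proof still presupposes that construction, just not the \kahler--Einstein theory on $X'$.
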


\begin{rem} \label{flat eta}
The flat metric on $\eta\big|_{\reg X}$ is given as follows:
let $\phi \from X' \to X$ be the \qe double cover such that $\phi^{[*]} \eta \isom \O{X'}$.
Choose a flat (i.e.~constant) metric on this trivial line bundle.
By uniqueness of flat metrics (up to scalars), this metric is $\phi$-invariant and hence descends to $\eta\big|_{\reg X}$.
\end{rem}

\begin{proof}[Proof of \cref{bochner cor}]
As $\Sym^{mn}(\Omegap{\reg X}1) \subset \Omegap{\reg X}{\tensor mn}$ and $\O{\reg X}(-mK_{\reg X}) \subset \T{\reg X}^{\tensor mn}$, statement~\lref{bochner cor.1} follows from \cref{bochner} with $p = q = mn$.

In the semispecial case~\lref{bochner cor.2}, we consider the double cover $\phi \from X' \to X$ as in \cref{flat eta}.
Note that $X'$ still has klt singularities~\cite[Prop.~5.20]{KM98} and that $K_{X'} = \phi^* K_X$ remains ample.
Let $\omega_{\ke, X'}$ be the \kahler--Einstein current on~$X'$ as in \cref{bochner setup}.
By the uniqueness part of~\cite[Theorem~7.8]{EGZ09}, we have $\omega_{\ke, X'} = \phi^* \omega_\ke$.

We can invoke~\lref{bochner cor.1} on $X'$ to see that the slope zero tensor $\phi^{[*]} \psi$ is parallel with respect to $\omega_{\ke, X'} = \phi^* \omega_\ke$.
This implies that $\psi$ itself is parallel with respect to $\omega_\ke$.
\end{proof}

\section{Proof of the Bochner principle}

In this section, we prove \cref{bochner}.
Since the proof is quite lengthy, it is divided into several steps.

\subsection*{Step~0: Setup}

We choose a projective resolution $\pi \from Y \to X$ such that
\[ \Exc(\pi) \ndef F = \sum_{j=1}^\ell F_j \]
is an snc divisor, and we set $Y^\circ \defn Y \setminus F$.
For each $j$, we fix a section $s_j \in \HH0.Y.\O Y(F_j).$ whose divisor is precisely $F_j$.
We also use the discrepancy formula
\[ K_Y = \pi^* K_X + \sum_{j=1}^\ell a_j F_j, \]
where the $a_j \in (-1, \infty) \cap \Q$ because $X$ is klt.
Furthermore, we choose an ample line bundle $A$ on $Y$ and a \kahler form $\omega_A$ on $Y$ representing $\cc1A$.
Finally, we will consider the vector bundle
\[ \sE_Y \defn \T Y^{\tensor p} \tensor \Omegap Y{\tensor q} \]
on $Y$, where $p$ and $q$ have the same value as in the definition of $\sE$.
By~\cite[Ch.~II, Lemma~5.3]{Har77}, there is an integer $k \gg 0$ such that $\tau$ extends to a holomorphic section $\sigma \in \HH0.Y.\sE_Y(kF).$.
(Actually, $k$ can be chosen independent of $\tau$, but we do not need this fact.)

\subsection*{Step~1: Smooth metrics on exceptional line bundles}

For any $1 \le j \le \ell$, choose a smooth hermitian metric $h_j$ on $\O Y(F_j)$, also denoted $\la -, - \ra_j$ or $| - |_j$.
Let $D_j = D_j' + \delbar$ be the Chern connection on $(\O Y(F_j), h_j)$ and let $\theta_j \defn i \, \Theta_{h_j}(F_j)$ be the corresponding curvature form.

We define for any $\eps > 0$ another smooth metric $h_{j, \eps}$ on $\O Y(F_j)$ by setting
\[ h_{j, \eps} \defn \frac1{|s_j|_j^2 + \eps^2} \cdot h_j. \]
The curvature form $\theta_{j, \eps} \defn i \, \Theta_{h_{j, \eps}}(F_j)$ of this new metric is given by
\[ \theta_{j, \eps} = \theta_j + \ddc \log \big( |s_j|_j^2 + \eps^2 \big). \]
As in~\cite[Rem.~9.3]{GGK19}, this can also be written
\[ \theta_{j, \eps} = \underbrace{\frac{\eps^2}{ \big( |s_j|_j^2 + \eps^2 \big)^2} \cdot \la D_j' s_j, D_j' s_j \ra_j}_{\ndef \beta_{j, \eps}}
\;\; + \;\;
\underbrace{\frac{\eps^2}{|s_j|_j^2 + \eps^2} \cdot \theta_j}_{\ndef \gamma_{j, \eps}}. \]
Furthermore, we set 
\[ h_F \defn \prod_{j=1}^\ell h_j \qquad \text{and} \qquad
h_{F, \eps} \defn \prod_{j=1}^\ell h_{j, \eps} = \frac{1}{\prod_{j=1}^\ell \big( |s_j|_j^2 + \eps^2 \big)} \cdot h_F. \]
These are smooth metrics on the line bundle $\O Y(F)$.
Finally, by formally setting $\eps = 0$ in the above definitions, we can define singular metrics $h_{j, 0}$ on $\O Y(F_j)$ and $h_{F, 0}$ on $\O Y(F)$.
These metrics are smooth when restricted to $Y^\circ$.

\subsection*{Step~2: Approximate \kahler--Einstein metrics}

For any two positive real numbers $t, \eps > 0$, we consider the unique \kahler metric $\omega_{t, \eps}$ on $Y$ which satisfies $\omega_{t, \eps} \in \cc1{\pi^* K_X + tA}$ and
\begin{equation} \label{approx ke}
\Ric \omega_{t, \eps} = - \omega_{t, \eps} + t \omega_A - \sum_{j=1}^\ell a_j \theta_{j, \eps}.
\end{equation}
For existence and uniqueness of $\omega_{t, \eps}$, see~\cite{Kobayashi84} and~\cite{TianYau87}.
It follows from~\cite[Thm.~4.5]{BermanGuenancia14} that as $t, \eps \to 0$, the forms $\omega_{t, \eps}$ converge to $\pi^* \omega_\ke$ on $Y^\circ$ in the $\sC^\infty_{\mathrm{loc}}$-topology.

\subsection*{Step~3: Curvature estimates}

The \kahler metric $\omega_{t, \eps}$ induces hermitian metrics on $\T Y^{\tensor p}$, $\Omegap Y{\tensor q}$ and $\sE_Y$, all of which we will call $h_{t, \eps}$.
What is more, the metric $h_{t, \eps} \tensor h_{F, \eps}^{\tensor k}$ on $\sE_Y(kF)$ will also be denoted by $h_{t, \eps}$.
To lighten notation, we will often drop the dependence on $t$ and $\eps$.
That is, we will denote the Chern connection $D_{t, \eps}$ on $(\sE_Y(kF), h_{t, \eps})$ simply by $D$ and set $| - | \defn | - |_{h_{t, \eps}}$ as well as $\la -, - \ra \defn \la -, - \ra_{h_{t, \eps}}$.

Consider the section $\sigma \in \HH0.Y.\sE_Y(kF).$ from Step~0.
Then according to the computations of~\cite[p.~256]{BochnerCGGN}, we have the estimate
\begin{equation} \label{main est}
0 \le \int_Y \frac{ |D \sigma|^2 }{ \big( |\sigma|^2 + 1 \big)^2 } \wedge \omega_{t, \eps}^{n-1} \le
\int_Y \frac{ \big\la i \, \Theta(\sE_Y(kF), h_{t, \eps}) \sigma, \, \sigma \big\ra }{ |\sigma|^2 + 1 } \wedge \omega_{t, \eps}^{n-1},
\end{equation}
and the goal is to understand the right-hand side integral.
But first, let us recall some auxiliary notation from~\cite{GGK19} and~\cite{BochnerCGGN}.

\begin{ntn}[\protect{\cite[Not.~9.11]{GGK19}}]
If $f$ is an endomorphism of some finite-dimensional complex vector space $V$, then $f^{\boxtimes p}$ is the endomorphism of $V^{\tensor p}$ defined by
\[ f^{\boxtimes p} \defn \sum_{i=1}^p \id_V^{\tensor(i-1)} \tensor f \tensor \id_V^{\tensor(p-i)}. \]
\end{ntn}

\noindent
Note that this is a linear operation: $(f + \lambda g)^{\boxtimes p} = f^{\boxtimes p} + \lambda \cdot g^{\boxtimes p}$ for $\lambda \in \C$.

\begin{ntn}[\protect{\cite[Not.~9.13]{GGK19}}]
If $\alpha$ is a smooth $(1, 1)$-form on $Y$, we denote by $\sharp_{t, \eps} \alpha$ the smooth endomorphism of $\T Y$ given by ``raising the indices'' in the $(0, 1)$~part of $\alpha$ with respect to the \kahler metric $\omega_{t, \eps}$.
As before, we will simply write $\sharp \alpha$ without reference to $t$ and $\eps$.
\end{ntn}

\begin{ntn}[\protect{\cite[Expl.~9.8]{GGK19}}]
The symbol $\tr_{t, \eps}$ will denote the trace relative to the \kahler metric~$\omega_{t, \eps}$.
That is, if $\alpha$ is an $F$-valued $(1, 1)$-form on $Y$, for some vector bundle $F$, then $\tr_{t, \eps} \alpha$ is the unique section of $F$ such that
\[ (\tr_{t, \eps} \alpha) \tensor \omega_{t, \eps}^n = n \cdot \alpha \wedge \omega_{t, \eps}^{n-1}. \]
\end{ntn}

We have (cf.~\cite[Claim~9.14]{GGK19})
\begin{align*}
\tr_{t, \eps} \!\big( i \, \Theta(\T Y^{\tensor p}, h_{t, \eps}) \big) & = \phantom - (\sharp \Ric \omega_{t, \eps})^{\boxtimes p} \quad \text{and} \\
\tr_{t, \eps} \!\big( i \, \Theta(\Omegap Y{\tensor q}, h_{t, \eps}) \big) & = - \overline{(\sharp \Ric \omega_{t, \eps})}^{\boxtimes q}.
\end{align*}
By~\lref{approx ke} and since $\sharp \omega_{t, \eps} = \id_{\T Y}$, it holds that
\begin{align*}
(\sharp \Ric \omega_{t, \eps})^{\boxtimes p} & = - p \cdot \id_{\T Y^{\tensor p}} + t \cdot (\sharp \omega_A)^{\boxtimes p} - \sum_{j=1}^\ell a_j (\sharp \theta_{j, \eps})^{\boxtimes p}, \\
- \overline{(\sharp \Ric \omega_{t, \eps})}^{\boxtimes q} & = \phantom - q \cdot \id_{\Omegap Y{\tensor q}} - t \cdot \overline{(\sharp \omega_A)}^{\boxtimes q} + \sum_{j=1}^\ell a_j \overline{(\sharp \theta_{j, \eps})}^{\boxtimes q}
\end{align*}
and so in summary
\begin{align*}
\tr_{t, \eps} \!\big( i \, \Theta(\sE_Y, h_{t, \eps}) \big) & = (\sharp \Ric \omega_{t, \eps})^{\boxtimes p} \tensor \id_{\Omegap Y{\tensor q}} - \id_{\T Y^{\tensor p}} \tensor \overline{(\sharp \Ric \omega_{t, \eps})}^{\boxtimes q} \\
& = (q - p) \id_{\sE_Y} + t \cdot \underbrace{\left( (\sharp \omega_A)^{\boxtimes p} \tensor \id_{\Omegap Y{\tensor q}} - \id_{\T Y^{\tensor p}} \tensor \overline{(\sharp \omega_A)}^{\boxtimes q} \right)}_{\ndef \frb} \\
& \qquad - \underbrace{\sum_{j=1}^\ell a_j \left( (\sharp \theta_{j, \eps})^{\boxtimes p} \tensor \id_{\Omegap Y{\tensor q}} - \id_{\T Y^{\tensor p}} \tensor \overline{(\sharp \theta_{j, \eps})}^{\boxtimes q} \right)}_{\ndef \frc}.
\end{align*}
Note that $\frb$ and $\frc$ are endomorphisms of $\sE_Y$.
Coming back to~\lref{main est}, the standard formula for the curvature of a tensor product reads in our case
\[ i \, \Theta(\sE_Y(kF), h_{t, \eps}) = i \, \Theta(\sE_Y, h_{t, \eps}) \tensor \id_{\O Y(kF)} + \id_{\sE_Y} \tensor i \, \Theta \big( \O Y(kF), h_{F, \eps}^{\tensor k} \big). \]
Choosing a local generator $e$ of the line bundle $\O Y(kF)$ and locally writing $\sigma = u \tensor e$ with $u$ a local section of $\sE_Y$, we get
\begin{equation}
\big\la i \, \Theta(\sE_Y(kF), h_{t, \eps}) \sigma, \, \sigma \big\ra =
\underbrace{\big\la i \, \Theta(\sE_Y, h_{t, \eps}) u, \, u \big\ra \cdot |e|_{h_{F, \eps}^{\tensor k}}^2}_{\ndef \mathrm{(I)}} +
\underbrace{k \sum_{j=1}^\ell \theta_{j, \eps} \cdot |\sigma|^2}_{\ndef \mathrm{(II)}}.
\end{equation}

\subsection*{Step~4: Computation of term~(I)}

By the definition of $\tr_{t, \eps}$ and the above computations, we have
\begin{align*}
n \cdot \big\la i \, \Theta(\sE_Y, h_{t, \eps}) u, \, u \big\ra \wedge \omega_{t, \eps}^{n-1}
& = \left\la \tr_{t, \eps} \!\big( i \, \Theta(\sE_Y, h_{t, \eps}) \big)(u), u \right\ra \cdot \omega_{t, \eps}^n \\
& = \left[ (q - p) |u|^2 + t \cdot \la \frb(u), u \ra - \la \frc(u), u \ra \right] \cdot \omega_{t, \eps}^n.
\end{align*}
The $\la \frc(u), u \ra$ term can be handled in exactly the same way as in~\cite[p.~257]{BochnerCGGN}, where we use $\omega_A$ as the reference \kahler metric $\omega_Y$ on $Y$.
As for the $\la \frb(u), u \ra$ term, note that $(\sharp \omega_A)^{\boxtimes p}$ is a positive endomorphism because $\omega_A$ is \kahler and that $\tr_{\mathrm{End}} (\sharp \omega_A)^{\boxtimes p} = p n^{p-1} \tr_{\mathrm{End}} \sharp \omega_A = p n^{p-1} \tr_{t, \eps} \omega_A$.
Hence by \cref{lem semipos}
\[ (\sharp \omega_A)^{\boxtimes p} \tensor \id_{\Omegap Y{\tensor q}} \le p n^{p-1} \tr_{t, \eps} \omega_A \cdot \id_{\sE_Y}. \]
The second term of $\frb$ can be dealt with in much the same way, so we obtain
\[ \frb \le C \cdot \tr_{t, \eps} \omega_A \cdot \id_{\sE_Y} \]
for some constant $C = C(n, p, q) > 0$.
Therefore
\begin{align*}
\la \frb(u), u \ra \cdot \omega_{t, \eps}^n & \le C \cdot |u|^2 \cdot \tr_{t, \eps} \omega_A \cdot \omega_{t, \eps}^n \\
& = n \cdot C \cdot |u|^2 \cdot \omega_A \wedge \omega_{t, \eps}^{n-1}.
\end{align*}
Observing (at least for terms $\mathrm{(IV)}$ and $\mathrm{(V)}$) that $\frac{|u|^2 \cdot |e|^2}{|\sigma|^2 + 1} = \frac{|\sigma|^2}{|\sigma|^2 + 1} \le 1$, we arrive at
\begin{align*}
\frac{ \mathrm{(I)} }{ |\sigma|^2 + 1 } \wedge \omega_{t, \eps}^{n-1} & \le \underbrace{\frac{q - p}n \cdot \frac{|\sigma|^2}{|\sigma|^2 + 1} \omega_{t, \eps}^n}_{\ndef \mathrm{(III)}} + \; t \cdot \underbrace{C \cdot \omega_A \wedge \omega_{t, \eps}^{n-1}}_{\ndef \mathrm{(IV)}} \\
& \qquad + \underbrace{C' \cdot \left( \beta_{j, \eps} + \frac{\eps^2}{|s_j|^2 + \eps^2} \cdot \omega_A \right) \wedge \omega_{t, \eps}^{n-1}}_{\ndef \mathrm{(V)}}
\end{align*}
for some constants $C, C' > 0$ sufficiently large.

\subsection*{Step~5: Computation of term~(II)}

This term can be handled in the same way as in~\cite[p.~258]{BochnerCGGN}.
Therefore
\[ \frac{ \mathrm{(II)} }{ |\sigma|^2 + 1 } \wedge \omega_{t, \eps}^{n-1} \le C \left( \beta_{j, \eps} + \frac{\eps^2}{|s_j|^2 + \eps^2} \cdot \omega_A \right) \wedge \omega_{t, \eps}^{n-1} \]
for some constant $C > 0$.
The right-hand side can be subsumed into term $\mathrm{(V)}$ by enlarging the constant $C'$.

\subsection*{Step~6: Taking the limit}

Putting everything together, we see from~\lref{main est} that
\begin{equation} \label{676}
0 \le \int_Y \frac{ \big\la i \, \Theta(\sE_Y(kF), h_{t, \eps}) \sigma, \, \sigma \big\ra }{ |\sigma|^2 + 1 } \wedge \omega_{t, \eps}^{n-1} \le \int_Y \mathrm{(III)} + t \cdot \mathrm{(IV)} + \mathrm{(V)}.
\end{equation}
We will first let $\eps \to 0$ (keeping $t$ fixed) and then let $t \to 0$.
To handle the first term on the right-hand side of~\lref{676}, define
\[ K(\sigma) \defn \liminf_{t \to 0} \liminf_{\eps \to 0} \int_Y \frac{|\sigma|_{t, \eps}^2}{|\sigma|_{t, \eps}^2 + 1} \omega_{t, \eps}^n, \]
where we have included the dependence of $|\sigma|$ on $t, \eps$ for clarification.

\begin{clm} \label{680}
If $\sigma \ne 0$, then $K(\sigma) > 0$.
\end{clm}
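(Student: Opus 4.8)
The plan is to bound the integral $\int_Y \frac{|\sigma|_{t,\eps}^2}{|\sigma|_{t,\eps}^2+1}\,\omega_{t,\eps}^n$ from below by localizing it to a fixed small ball inside $Y^\circ$ and invoking the locally uniform convergence $\omega_{t,\eps}\to\pi^*\omega_\ke$ on $Y^\circ$ established in Step~2. Since the integrand is non-negative, it suffices to produce a single relatively compact open set $U\Subset Y^\circ$ and a constant $\delta>0$ with $\int_U \frac{|\sigma|_{t,\eps}^2}{|\sigma|_{t,\eps}^2+1}\,\omega_{t,\eps}^n\ge\delta$ for all sufficiently small $t,\eps>0$; the iterated $\liminf$ defining $K(\sigma)$ then yields $K(\sigma)\ge\delta>0$.

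\textbf{Choosing a good point.} As $Y$ is irreducible and $Y^\circ=Y\setminus F$ is a nonempty Zariski-open subset, the hypothesis $\sigma\ne0$ forces $\sigma|_{Y^\circ}\not\equiv0$, so its zero locus is a proper analytic subset and we may fix $y_0\in Y^\circ$ with $\sigma(y_0)\ne0$. On $Y^\circ$ the resolution $\pi$ is a local biholomorphism onto the smooth locus $\reg X$, hence $\pi^*\omega_\ke$ is a genuine smooth \kahler form there; together with the metric $h_{F,0}^{\tensor k}$ on $\O Y(kF)$, which is smooth on $Y^\circ$, it induces a smooth positive-definite metric $h_\infty\defn h^{\sE_Y}_{\pi^*\omega_\ke}\tensor h_{F,0}^{\tensor k}$ on $\sE_Y(kF)\big|_{Y^\circ}$, where $h^{\sE_Y}_{\pi^*\omega_\ke}$ is the metric on $\sE_Y$ induced by $\pi^*\omega_\ke$. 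Consequently $|\sigma|_{h_\infty}^2$ is continuous and strictly positive near $y_0$, so we may fix a relatively compact neighbourhood $U\Subset Y^\circ$ of $y_0$ and a constant $c>0$ with $|\sigma|_{h_\infty}^2\ge c$ on $\overline U$.

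\textbf{Passing to the limit on $\overline U$.} By Step~2, $\omega_{t,\eps}\to\pi^*\omega_\ke$ in $\sC^\infty_{\mathrm{loc}}$ on $Y^\circ$, hence uniformly on $\overline U$ together with all derivatives; moreover the metrics $h_{F,\eps}=\big(\prod_j(|s_j|_j^2+\eps^2)\big)^{-1}h_F$ converge uniformly on $\overline U$ to $h_{F,0}$, because each $|s_j|_j$ is bounded away from zero on the compact set $\overline U\subset Y^\circ$. It follows that the metric $h_{t,\eps}$ on $\sE_Y(kF)$ (the tensor product of the $\omega_{t,\eps}$-induced metric on $\sE_Y$ with $h_{F,\eps}^{\tensor k}$) converges uniformly on $\overline U$ to $h_\infty$, and that the volume forms $\omega_{t,\eps}^n$ converge uniformly on $\overline U$ to the strictly positive smooth volume form $(\pi^*\omega_\ke)^n$. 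Hence there are $t_0,\eps_0>0$ such that $|\sigma|_{t,\eps}^2\ge c/2$ and $\omega_{t,\eps}^n\ge\tfrac12(\pi^*\omega_\ke)^n$ on $\overline U$ whenever $0<t<t_0$ and $0<\eps<\eps_0$. Since $x\mapsto\frac{x}{x+1}$ is increasing on $[0,\infty)$, restricting the integral to $U$ gives, for such $t,\eps$,
\[ \int_Y \frac{|\sigma|_{t,\eps}^2}{|\sigma|_{t,\eps}^2+1}\,\omega_{t,\eps}^n \ge \frac{c/2}{c/2+1}\cdot\frac12\int_U (\pi^*\omega_\ke)^n \ndef \delta > 0. \]
Fixing an arbitrary $t\in(0,t_0)$ and letting $\eps\to0$, then letting $t\to0$, we conclude $K(\sigma)\ge\delta>0$.

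\textbf{Main obstacle.} I do not anticipate a serious difficulty: the argument is pure localization plus continuity. The only point requiring care is that the approximating metrics $h_{F,\eps}$ on $\O Y(kF)$ degenerate, as $\eps\to0$, only along the exceptional divisor $F$, so restricting attention to a compact subset of $Y^\circ$ keeps all the approximating data — the \kahler metrics $\omega_{t,\eps}$, the induced metrics $h_{t,\eps}$ on $\sE_Y(kF)$, and the volume forms $\omega_{t,\eps}^n$ — uniformly convergent on that subset. The genuinely nontrivial input is the convergence statement of Step~2, which may be assumed.
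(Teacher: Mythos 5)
Your proof is correct and follows essentially the same route as the paper: restrict the integral to a relatively compact open subset $U \Subset Y^\circ$ on which $\sigma$ does not vanish, use the $\sC^\infty_{\mathrm{loc}}$-convergence $\omega_{t,\eps} \to \pi^*\omega_\ke$ and $h_{F,\eps} \to h_{F,0}$ away from $F$, and conclude that the limiting integral over $U$ is strictly positive. The paper phrases the final step as a convergence of the integrals rather than your uniform lower bound $\delta$, but this is only a cosmetic difference.
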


\begin{proof}
Let $U \Subset Y^\circ$ be a nonempty relatively compact open subset.
As $t, \eps \to 0$, we have smooth convergence $\omega_{t, \eps} \to \pi^* \omega_\ke$ and $h_{F, \eps} \to h_{F, 0}$ on $U$.
Therefore
\[ K(\sigma) \ge \lim_{t \to 0} \lim_{\eps \to 0} \int_U \frac{|\sigma|_{t, \eps}^2}{|\sigma|_{t, \eps}^2 + 1} \omega_{t, \eps}^n = \int_U \frac{|\sigma|^2}{|\sigma|^2 + 1} \pi^* \omega_\ke^n > 0, \]
where $|\sigma|$ is taken with respect to $\pi^* \omega_\ke$ and $h_{F, 0}$.
\end{proof}

Returning to~\lref{676}, the second term tends to zero:
\[ \lim_{t \to 0} \lim_{\eps \to 0} \int_Y t \cdot \mathrm{(IV)} = \lim_{t \to 0} t \cdot C (\pi^* K_X + tA)^{n-1} \cdot A = 0. \]
So does the third term, according to~\cite[p.~258]{BochnerCGGN}:
\[ \lim_{t \to 0} \lim_{\eps \to 0} \int_Y \mathrm{(V)} = 0. \]
In total, we get
\[ 0 \le \limsup_{t \to 0} \limsup_{\eps \to 0} \int_Y \frac{ \big\la i \, \Theta(\sE_Y(kF), h_{t, \eps}) \sigma, \, \sigma \big\ra }{ |\sigma|^2 + 1 } \wedge \omega_{t, \eps}^{n-1} \le \frac{q - p}n \cdot K(\sigma). \]

\subsection*{Step~7: Conclusion in case $p = q$}

In this case,
\[ \limsup_{t \to 0} \limsup_{\eps \to 0} \int_Y \frac{ \big\la i \, \Theta(\sE_Y(kF), h_{t, \eps}) \sigma, \, \sigma \big\ra }{ |\sigma|^2 + 1 } \wedge \omega_{t, \eps}^{n-1} = 0 \]
and we conclude as in~\cite[p.~258]{BochnerCGGN} that $\tau$ is parallel with respect to $\omega_\ke$.

\subsection*{Step~8: Conclusion in case $p > q$}

In this case, from $0 \le \frac{q - p}n \cdot K(\sigma)$ we deduce $K(\sigma) = 0$, hence $\sigma = 0$ by \cref{680}.
This is clearly equivalent to $\tau = 0$, which was to be shown. \qed

\section{The holonomy of the \kahler--Einstein metric on $X$}

In this section, we use the Bochner principle to obtain information about the holonomy group of $(\reg X, \omega_\ke)$.
It will turn out that it is a finite extension of a product of copies of $\U1$, cf.~\cref{Hol full polydisc}.

\begin{setup} \label{setup holonomy}
Let $(X, \omega_\ke)$ be as above.
Write $g_\ke$ for the associated Riemannian metric on $\reg X$ and $h_\ke$ for the associated Hermitian metric on $\T{\reg X}$.
Fix, once and for all, a smooth point $x \in \reg X$, and consider the tangent space $V \defn T_x X$ at that point.
We write
\begin{align*}
H \defn \operatorname{Hol}(\reg X, g_\ke, x) & \subset \U{V, h_{\ke, x}} \isom \U n \quad \text{and} \\
H^\circ \defn \operatorname{Hol}^\circ(\reg X, g_\ke, x) & \subset \U{V, h_{\ke, x}}
\end{align*}
for the corresponding (restricted) holonomy group.
Recall from~\cite[Cor.~10.41]{Besse87} that the action $H^\circ \acts V$ is \emph{totally decomposed}.
That is, there are decompositions
\begin{equation} \label{canonical}
V = V_0 \oplus V_1 \oplus \cdots \oplus V_m \quad \text{and} \quad H^\circ = H_1^\circ \x \cdots \x H_m^\circ
\end{equation}
such that for each $1 \le i \le m$, the factor $H_i^\circ$ acts irreducibly and non-trivially on~$V_i$ and trivially on $V_j$ for $j \ne i$.
Set $n_i \defn \dim_\C V_i$ for each $0 \le i \le m$.
\end{setup}

\begin{lem}
In the above setup, we always have $n_0 = 0$.
\end{lem}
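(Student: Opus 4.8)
The claim $n_0 = 0$ says that the restricted holonomy representation $H^\circ \acts V$ has no trivial summand, i.e.\ there is no nonzero parallel tangent vector at $x$. I would prove this by contradiction: if $n_0 > 0$, then the flat factor $V_0 \subset V = T_x X$ is preserved by $H^\circ$, and by the holonomy principle it extends to a holomorphic distribution $\sF \subset \T{\reg X}$ that is parallel with respect to $\omega_\ke$. Parallel subbundles of a Hermitian vector bundle are preserved by the Chern connection and admit parallel orthogonal complements, so $\T{\reg X} = \sF \oplus \sF^\perp$ as a $D$-parallel orthogonal decomposition, and $\sF$ is flat (its curvature is the restriction of $\Theta(\T{\reg X})$, which on a parallel subbundle splits).

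\textbf{Deriving a contradiction with $K_X$ ample.} The key point is that a nonzero parallel holomorphic subsheaf of $\T{\reg X}$ extends to a reflexive subsheaf $\sF \subset \T X$ on all of $X$, and its determinant $\det \sF$ is a reflexive sheaf of rank one carrying a flat Hermitian metric on $\reg X$; hence $c_1(\det \sF) = c_1(\sF) = 0$ (as a class with $\R$-coefficients, since the curvature of the induced metric on $\det \sF\big|_{\reg X}$ vanishes, and this computes $c_1$ up to the codimension-$\ge 2$ issue, which is handled because $X$ is klt and $c_1$ of a reflexive sheaf is a Weil divisor class). On the other hand, the quotient $\T X / \sF \cong \sF^\perp$ also has $c_1 = 0$, so $c_1(\T X) = c_1(\sF) + c_1(\T X/\sF) = 0$ on $\reg X$; but $c_1(\T X) = -c_1(\can X) = -[K_X]$, and $K_X$ is ample, so $[K_X] \ne 0$ — contradiction. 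Thus $n_0 = 0$.

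\textbf{Where the care is needed.} The one genuinely technical point is passing between the smooth picture on $\reg X$ and global statements on the klt space $X$: one must know that a $D_{\sE}$-parallel holomorphic subbundle of $\T{\reg X}$ extends across $\sing X$ to a reflexive subsheaf of $\T X$, and that Chern class computations (in particular $c_1$ of a reflexive sheaf with flat metric on the regular locus) behave as expected. This is standard for klt spaces — extension of reflexive sheaves across codimension $\ge 2$, and the fact that $\sing X$ has complex codimension $\ge 2$ so it does not affect $c_1$ computations — but it is the part I would write out carefully, citing the relevant extension and Chern-class results. A cleaner alternative that avoids even this: if $n_0 > 0$ then by the holonomy principle there is a nonzero parallel holomorphic $1$-form (or vector field), i.e.\ a nonzero element of $\HH0.\reg X.\Omegap{\reg X}1. = \HH0.X.\Omegar X1.$; but a variety with klt singularities and ample $K_X$ has no nonzero reflexive $1$-forms (this follows, e.g., from Bogomolov--Sommese vanishing on a resolution, or directly since such forms would contradict $K_X$ being ample via the induced section of a symmetric power with $c_1 < 0$). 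This gives the contradiction immediately, and is the route I would actually take, as it sidesteps the subbundle-extension machinery entirely.
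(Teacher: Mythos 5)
The paper's own proof is a two-line local argument: by the de Rham decomposition theorem applied near $x$, a summand $V_0 \ne 0$ on which $H^\circ$ acts trivially makes $(\reg X, \omega_\ke)$ locally a product with a nontrivial flat, hence Ricci-flat, factor; this contradicts $\Ric(\omega_\ke) = -\omega_\ke$ being negative definite. Your proposal replaces this with global arguments, and both of your routes have genuine gaps.

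In your first route, the step ``the quotient $\T X/\sF \cong \sF^\perp$ also has $c_1 = 0$'' is false. Only the summand $\sF$ corresponding to $V_0$ is flat (its restricted holonomy is trivial); the complement $\sF^\perp$ carries all of the nontrivial holonomy $H_1^\circ \x \cdots \x H_m^\circ$ and satisfies $c_1(\sF^\perp) = c_1(\T X) - c_1(\sF) = -[K_X] \ne 0$. So no contradiction with $c_1(\T X) = -[K_X]$ arises. One could repair this by invoking semistability of $\T X$ with respect to $K_X$ (a nonzero subsheaf of slope $0$ would violate $\mu(\T X) < 0$), but that is a nontrivial external input you do not cite, and it is far heavier than the local Ricci argument, which already finishes the proof: the induced metric on the parallel flat summand has zero Ricci curvature, whereas $-\omega_\ke\big|_{\sF}$ is negative definite.

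Your second route, which you say you would actually take, is worse. First, the holonomy principle produces a global parallel section on $\reg X$ only from a vector fixed by the \emph{full} holonomy group $H$; a vector in $V_0$ is fixed only by $H^\circ$, and $H/H^\circ$ (which need not be finite here) can act on $V_0$ with no nonzero fixed vector, so you do not get a parallel $1$-form or vector field on $\reg X$. Second, and more fatally, the claim that a klt variety with ample $K_X$ has no nonzero reflexive $1$-forms is simply false: a curve of genus $\ge 2$, or a product of such curves --- which is exactly a polydisc quotient, the object of this paper --- has ample canonical divisor and $h^0(\Omegap X1) > 0$. Bogomolov--Sommese controls $\Omegar X1 [\tensor] L^{-1}$ for $L$ big, not $\Omegar X1$ itself. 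The vector-field variant of your argument ($\HH0.X.\T X. = 0$ when $K_X$ is ample, e.g.\ by part~(2) of the Bochner principle with $p=1$, $q=0$) is the one that is true, but it still founders on the $H$ versus $H^\circ$ issue above.
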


\begin{proof}
Assuming that $n_0 \ge 1$, apply~\cite[Thm.~10.38]{Besse87} to a sufficiently small simply connected neighborhood $x \in U \subset X$.
We obtain that $(\reg X, \omega_\ke)$ is locally a product containing a non-trivial flat factor.
In particular, that factor is Ricci-flat.
This contradicts the fact that $\Ric(\omega_\ke) = - \omega_\ke$ is negative definite on $\reg X$.
\end{proof}

The following lemma can be found in~\cite[Lemma~10.113]{Besse87}, but without a proof.

\begin{lem} \label{normalizer lemma}
The quotient of normalizer subgroups
\[ \factor{N_{\U V}(H^\circ)}{N_{\U{V_1}}(H_1^\circ) \x \cdots \x N_{\U{V_m}}(H_m^\circ)} \]
is finite, of order $\le m!$.
\end{lem}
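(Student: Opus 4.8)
The plan is to exhibit an injective group homomorphism from $N_{\U V}(H^\circ)$ into the symmetric group $S_m$ on $m$ letters (which has order $m!$) whose kernel is exactly the block-diagonal product $N_{\U{V_1}}(H_1^\circ) \x \cdots \x N_{\U{V_m}}(H_m^\circ)$. Since that product, being a kernel, is automatically a normal subgroup of $N_{\U V}(H^\circ)$, the first isomorphism theorem then identifies the quotient in the statement with a subgroup of $S_m$, which is finite of order $\le m!$.

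First I would set up the representation theory. By the previous lemma $n_0 = 0$, so $V = V_1 \oplus \cdots \oplus V_m$ as a representation of $H^\circ = H_1^\circ \x \cdots \x H_m^\circ$, each $V_i$ being irreducible and of the form: irreducible nontrivial on the factor $H_i^\circ$, trivial on $H_j^\circ$ for $j \ne i$. The key elementary point is that $V_i \isom V_j$ as $H^\circ$-representations forces $i = j$: restricting to the factor $H_i^\circ$, the representation $V_i$ is nontrivial while $V_j$ ($j \ne i$) is trivial. Hence the $V_i$ are the isotypic components of $V$, and by Schur's lemma \emph{every} irreducible $H^\circ$-subrepresentation of $V$ coincides with one of the $V_i$. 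I would also record the intrinsic description $H_i^\circ = \set{ h \in H^\circ : h|_{V_j} = \id_{V_j} \text{ for all } j \ne i }$, which uses that $H_i^\circ$ acts faithfully on $V_i$ (as $H^\circ \subset \U V$ acts faithfully and $H_i^\circ$ acts trivially off $V_i$).

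Next, for $g \in N_{\U V}(H^\circ)$ I would check that $g(V_i)$ is again an irreducible $H^\circ$-subrepresentation: it is $H^\circ$-invariant because $h\,g(V_i) = g\,(g^{-1}hg)(V_i) = g(V_i)$ for $h \in H^\circ$, and irreducible because $g^{-1}$, being linear and invertible, carries any proper $H^\circ$-invariant subspace of $g(V_i)$ to one of $V_i$. By the previous paragraph, $g(V_i) = V_{\sigma_g(i)}$ for a unique $\sigma_g \in S_m$, and $g \mapsto \sigma_g$ is visibly a homomorphism. If $\sigma_g = \id$ then $g$ preserves each $V_i$, so $g = (g_1, \dots, g_m)$ with $g_i \in \U{V_i}$; applying the intrinsic description of $H_i^\circ$ together with the facts that $g$ normalizes $H^\circ$ and fixes every $V_j$ shows $g_i \in N_{\U{V_i}}(H_i^\circ)$. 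The reverse inclusion is immediate, since any block-diagonal element normalizing each $H_i^\circ$ normalizes $H^\circ$ and fixes every $V_i$. Therefore
\[ \ker\bigl(g \mapsto \sigma_g\bigr) = N_{\U{V_1}}(H_1^\circ) \x \cdots \x N_{\U{V_m}}(H_m^\circ), \]
and the conclusion follows.

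I do not anticipate a genuine obstacle here; the only step requiring care is the claim that the $V_i$ are pairwise non-isomorphic, i.e. that they are full isotypic components. This is precisely what prevents $g$ from mixing isomorphic summands along a diagonal, forcing it to merely permute the $V_i$, and it is exactly where the product structure $H^\circ = H_1^\circ \x \cdots \x H_m^\circ$ with each factor acting trivially off its own block is used. Everything else is bookkeeping with Schur's lemma and the first isomorphism theorem.
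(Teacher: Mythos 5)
Your proof is correct and follows essentially the same route as the paper's: both rest on the observation that the $V_i$ are pairwise non-isomorphic irreducible $H^\circ$-summands (hence the isotypic components of $V$), so that by Schur's lemma any element normalizing $H^\circ$ must permute them. You are in fact somewhat more complete than the paper, which stops at the permutation statement and asserts that it ``implies the claim,'' whereas you explicitly identify the kernel of $g \mapsto \sigma_g$ with $N_{\U{V_1}}(H_1^\circ) \x \cdots \x N_{\U{V_m}}(H_m^\circ)$; the only blemish is the slip in your opening sentence, where the homomorphism to $S_m$ is of course not injective --- it is the induced map on the quotient that is.
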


\begin{proof}
For simplicity of notation, let us assume that $m = 2$.
The general case can be handled similarly.
We will show that any $g \in \U V$ that normalizes $H^\circ$ must permute the summands of $V$, i.e.~either $g(V_i) = V_i$ for $i = 1, 2$ or $g(V_i) = V_{3-i}$ for $i = 1, 2$.
This implies the claim.

To this end, consider each $V_i$ as an (irreducible) $H^\circ$-representation via the projection map $H^\circ \surj H_i^\circ$.
Then clearly, $V \isom V_1 \oplus V_2$ as $H^\circ$-representations.
But note that $V_1$ and $V_2$ are \emph{not} isomorphic as $H^\circ$-representations, even if $n_1 = n_2$ and $H_1^\circ = H_2^\circ$ as subgroups of $\U{n_1}$.
This is because the respective kernels are $\set1 \x H_2^\circ$ and $H_1^\circ \x \set1$, which are never equal.
By Schur's lemma, the only non-trivial subrepresentations of $V$ are $V_1 \oplus 0$ and $0 \oplus V_2$.

Arguing by contradiction, assume now that $g(V_1) \ne V_1, V_2$.
By the above, $g(V_1)$ is not stable under the action of $H^\circ$.
So there is an $h \in H^\circ$ such that $h \big( g(V_1) \big) \ne g(V_1)$.
In other words, $(g\inv hg)(V_1) \ne V_1$.
But then $g\inv hg \not\in H^\circ$, and $g$ does not normalize $H^\circ$.

The same argument also shows that $g(V_2) \in \set{ V_1, V_2 }$.
Since $g$ acts in particular as a bijection, it must permute $V_1$ and $V_2$.
\end{proof}

\begin{lem} \label{Hol0 polydisc}
Assume that $X$ satisfies condition~\lref{main1.3}, i.e.~there is a semispecial tensor $\psi$ on $X$ such that the hypersurface $\set{ \psi_x = 0 } \subset \bP V$ is reduced.
Then the restricted holonomy of $X$ is $H^\circ = \U 1^n$.
More precisely, in~\lref{canonical} we have $m = n$ and $n_i = 1$, $H_i^\circ = \U{V_i} \isom \U 1$ for each $1 \le i \le m$.
\end{lem}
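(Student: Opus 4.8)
The plan is to combine the Bochner principle with the local de~Rham decomposition of $(\reg X,\omega_\ke)$ and Berger's holonomy theorem, thereby reducing the lemma to a fact about the relative invariants of the isotropy groups of bounded symmetric domains. First, by \cref{bochner cor}, part~\lref{bochner cor.2}, the semispecial tensor $\psi$ is parallel for $\omega_\ke$ and the flat metric on $\eta\big|_{\reg X}$, so its value $\psi_x \in \Sym^n V^* \tensor \bigwedge^n V \tensor \eta_x$ is fixed by the holonomy group, in particular by $H^\circ$. Since $\eta$ is $2$-torsion and $H^\circ$ is connected, $H^\circ$ acts trivially on the line $\eta_x$ and by $\det$ on $\bigwedge^n V$, so after trivializing these two lines we obtain a nonzero $P \in \Sym^n V^*$ with $P(hv) = \det(h)\,P(v)$ for all $h \in H^\circ$, $v \in V$, and by hypothesis $\set{P = 0} = \set{\psi_x = 0} \subset \bP V$ is reduced. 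Next, by \cite[Thm.~10.38]{Besse87} a simply connected neighbourhood of $x$ is a Kähler product $M_1 \x \cdots \x M_m$, with $\dim_\C M_i = n_i$ and $M_i$ of restricted holonomy $H_i^\circ$ acting irreducibly on $V_i$ (recall $n_0 = 0$). Accordingly, over such a neighbourhood $\Sym^n(\Omegap X1)(-K_X)$ splits into the direct sum, over all $d_1 + \cdots + d_m = n$, of the external tensor products $\Sym^{d_1}(\Omegap{M_1}1)(-K_{M_1}) \boxtimes \cdots \boxtimes \Sym^{d_m}(\Omegap{M_m}1)(-K_{M_m})$, and the parallel section $\psi$ splits into parallel sections of these summands. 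Writing $W_i(d)$ for the space of degree-$d$ polynomials on $V_i$ transforming by $\det_{V_i}$ under $H_i^\circ$ — equivalently, the parallel sections of $\Sym^d(\Omegap{M_i}1)(-K_{M_i})$ — the piece of $\psi$ of multidegree $(d_1,\dots,d_m)$ lies in $\bigotimes_{i=1}^m W_i(d_i)$.

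The crux is to show that $W_i(d) \ne 0$ forces $d = n_i$. Each $M_i$ is an irreducible Kähler manifold with $\Ric < 0$, so by Berger's theorem (see \cite{Besse87}) either $H_i^\circ = \U{V_i}$, or $M_i$ is locally an irreducible bounded symmetric domain which is not a ball, with $H_i^\circ$ its isotropy representation. In the first case, if $n_i \ge 2$ then $\Sym^d V_i^*$ is an irreducible $\U{V_i}$-module of dimension $> 1$ for $d \ge 1$ and the trivial character for $d = 0$, so it never equals the nontrivial character $\det_{V_i}$ and $W_i(d) = 0$ for all $d$; since $\psi \ne 0$ forces every $W_j(d_j)$ in its nonzero piece to be nonzero, no such factor can occur, and so $n_i = 1$, $H_i^\circ = \U 1$, and $W_i(d) \ne 0$ only for $d = 1$. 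In the second case, the graded ring of $H_i^\circ$-semi-invariants on $V_i$ is freely generated by the generic norm $N_i$, of degree $r_i \ge 2$ (the rank of $M_i$) and transforming by a character $\chi_0$ of infinite order; so a nonzero element of $W_i(d)$ is $c\,N_i^k$ with $d = r_i k$ and $\chi_0^k = \det_{V_i}$, and a weight computation identifies $\det_{V_i}$ with $\chi_0^{n_i/r_i}$ when $r_i \mid n_i$ (and shows $\det_{V_i}$ is no power of $\chi_0$ otherwise, so $W_i \equiv 0$, again impossible); hence $k = n_i/r_i$, i.e.\ $d = n_i$.

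Since $\psi \ne 0$, some piece $\bigotimes_i W_i(d_i)$ is nonzero, hence all $W_i(d_i) \ne 0$, hence all $d_i = n_i$; and every other piece vanishes, because $d_j \ne n_j$ gives $W_j(d_j) = 0$. Thus $\psi$ has pure multidegree $(n_1,\dots,n_m)$, and $P \in \bigotimes_i W_i(n_i)$ with each factor one-dimensional and nonzero. Taking a generator $P_i$ of $W_i(n_i)$ — a linear form if $n_i = 1$, and the power $N_i^{n_i/r_i}$ if $M_i$ is a higher-dimensional bounded symmetric domain — we get $P = c\prod_i P_i$, so $\set{P = 0}$ is the sum of the pullbacks of the divisors $\set{P_i = 0}$. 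Were some $M_i$ not a disc, it would be a bounded symmetric domain of rank $r_i < n_i$, making $\set{P_i = 0} = (n_i/r_i)\set{N_i = 0}$ non-reduced and contradicting the reducedness of $\set{P = 0}$. Hence $n_i = 1$ and $H_i^\circ = \U{V_i} \isom \U 1$ for all $i$; in particular $m = n$ and $H^\circ = \U 1^n$.

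The main obstacle is the representation theory in the second paragraph: Berger's classification is needed to exclude exotic irreducible holonomy, and then one must know that the ring of relative invariants of the isotropy group of each bounded symmetric domain is the polynomial ring on the generic norm, together with the numerical identity $\det_{V_i} = \chi_0^{n_i/r_i}$ — equivalently, the fact that an irreducible bounded symmetric domain has rank equal to its dimension only when it is the disc. The remainder is formal manipulation of the de~Rham splitting.
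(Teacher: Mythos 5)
Your argument is correct, and its skeleton is the same as the paper's: the Bochner principle (\cref{bochner cor}) turns $\psi_x$ into an $H^\circ$-semi-invariant polynomial, the structure of relative invariants of isotropy groups of bounded symmetric domains forces it to be a product of powers of generic norms, and reducedness plus a degree count excludes every factor that is not a disc. The difference is in execution. The paper splits $V$ only into a Hermitian-symmetric part $U_1$ and a remainder $U_2$, citing \cite[Prop.~A.1]{CataneseFranciosi09} to discard $U_2$ and \cite[Cor.~2.2]{CataneseDiScala13} for the factorization $f = c \prod N_j^{k_j}$; you instead work through the full de~Rham decomposition factor by factor, invoking Berger's theorem and computing the spaces $W_i(d)$ of relative invariants directly. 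This is more self-contained (it essentially re-proves the two cited results) at the cost of more representation theory. One claim should be corrected: for a \emph{non-tube-type} irreducible domain, the ring of relative invariants of the isotropy group is not ``freely generated by the generic norm of degree equal to the rank'' --- it is trivial, because the complement of the open orbit of the complexified isotropy group has codimension at least two, so there is no invariant hypersurface at all. Hence for such factors $W_i \equiv 0$ directly, which is exactly what you need (the factor is excluded either way), but the statement as written only holds in the tube-type case; this is precisely the tube-type hypothesis built into \cite[Cor.~2.2]{CataneseDiScala13}. The numerical identity $\det_{V_i} = \chi_0^{n_i/r_i}$ and the fact that an irreducible tube-type domain has rank equal to its dimension only for the disc are correct, and they encode the same information as the paper's chain of inequalities $\sum k_j r_j = n \ge \sum \dim \cD_j' + \dim \cD'' \ge \sum r_j + \dim \cD''$.
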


The proof is modeled on~\cite[proof of Thm.~1.1]{CataneseDiScala13}.
They argue on the universal cover of $X$, which we clearly cannot do.
Our observation is that much of the argument can be recast in purely local terms.

\begin{proof}
Consider the decomposition
\[ V = U_1 \oplus U_2 \]
where $U_1$ is the sum of all the $V_i$ such that the corresponding $H_i^\circ$ is the holonomy of a locally irreducible Hermitian symmetric space, and $U_2$ is the sum of the remaining factors.

By \cref{bochner cor}, the tensor $\psi$ is parallel with respect to $\omega_\ke$.
Then by the holonomy principle, the homogeneous degree $n$ polynomial $\psi_x$ on $V$ is $H^\circ$-invariant.
In particular, its zero scheme $\set{ \psi_x = 0 } \subset \bP V$ is also $H^\circ$-invariant and we can apply~\cite[Prop.~A.1]{CataneseFranciosi09} to it.
We obtain that $\psi_x$ does not depend on the summand~$U_2$, i.e.~there is a polynomial $f$ on $U_1$ (likewise homogeneous of degree $n$) such that $\psi_x$ is the pullback of $f$ along the projection $V \surj U_1$.

Pick a bounded symmetric domain $0 \in \cD \subset \C^{\dim U_1}$ such that the action of $H^\circ$ on $U_1$ equals the action of $K^\circ$ on $T_0 \cD$, where $K \subset \Aut(\cD)$ is the stabilizer of $0 \in \cD$ and $K^\circ$ is its identity component.
Now apply~\cite[Cor.~2.2 and remark thereafter]{CataneseDiScala13} to the polynomial $f$ considered as a function on $T_0 \cD$ (and hence on $\cD$).
We obtain a splitting
\[ f = c \cdot \prod_{j=1}^p N_j^{k_j}, \]
where:
\begin{itemize}
\item $c \ne 0$ is a suitable constant,
\item $\cD = \cD_1' \x \cdots \x \cD_p' \x \cD''$, where the $\cD_i'$ are irreducible and of tube type, while $\cD''$ has no irreducible factor of tube type,
\item each $N_j$ is a $K$-semi-invariant polynomial on $\cD_j'$, of degree equal to $r_j \defn \rk \cD_j'$, and
\item all the exponents $k_j$ satisfy $0 \le k_j \le 1$ since $f$ is squarefree, the hypersurface $\set{ \psi_x = 0 }$ being reduced.
\end{itemize}
It then holds that
\[ \sum_{j=1}^p k_j r_j = \deg f = n \ge \dim \cD = \sum_{j=1}^p \dim \cD_j' + \dim \cD'' \ge \sum_{j=1}^p r_j + \dim \cD'' \]
while at the same time obviously
\[ \sum_{j=1}^p k_j r_j \le \sum_{j=1}^p r_j + \dim \cD''. \]
Therefore we must have equality everywhere.
We conclude that $k_1 = \cdots = k_p = 1$, $\dim \cD'' = 0$, and $U_2 = 0$.
We also conclude that $r_j = \dim \cD_j'$ for each $j$, so each $\cD_j'$ is a polydisc.
But the $\cD_j'$ are irreducible, hence isomorphic to the (one-dimensional) unit disc.
In particular, $p = n$.
This proves the claim.
\end{proof}

\begin{cor} \label{Hol full polydisc}
Assume that $X$ satisfies condition~\lref{main1.3}.
Then the full holonomy group $H$ is compact.
\end{cor}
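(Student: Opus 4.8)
The plan is to deduce compactness of $H$ from finiteness of the index $[H : H^\circ]$, which I would obtain by exhibiting an explicit homomorphism out of $H$ with finite image. By \cref{Hol0 polydisc}, condition~\lref{main1.3} forces the restricted holonomy to be $H^\circ = \U1^n$, realised as the product $\prod_{i=1}^n \U{V_i}$ acting on the mutually orthogonal, one-dimensional de~Rham factors of $V = V_1 \oplus \cdots \oplus V_n$ from \cref{setup holonomy}. In particular $H^\circ$ is a maximal torus of $\U V \isom \U n$, hence a closed and compact subgroup; so it suffices to show that $H$ meets only finitely many cosets of $H^\circ$, since then $H$ is a finite union of compact translates $gH^\circ$ and is therefore compact.

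First I would make the $H$-invariant hypersurface $\set{\psi_x = 0} \subset \bP V$ completely explicit. By \cref{bochner cor} the semispecial tensor $\psi$ is parallel, so the holonomy principle applies: since $H$ acts through a character on the fibre at $x$ of the one-dimensional twisting sheaf $\O X(-K_X) [\tensor] \eta$, the class of $\psi_x$ as a degree-$n$ homogeneous polynomial on $V$ — well defined up to scalar — spans an $H$-stable, in particular $H^\circ$-stable, line in $\Sym^n(V^\vee)$. Choosing linear coordinates $z_1, \dots, z_n$ dual to the splitting $V = \bigoplus V_i$, the space $\Sym^n(V^\vee)$ decomposes into the one-dimensional, pairwise distinct weight spaces $\C \cdot z^a$ (with $a_1 + \cdots + a_n = n$) for the torus $H^\circ$, so $\psi_x$ must be proportional to a single monomial $z^a$. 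The reducedness hypothesis in~\lref{main1.3} forces $a_i \le 1$ for every $i$, and combined with $a_1 + \cdots + a_n = n$ this leaves only $a = (1, \dots, 1)$. Hence
\[ \set{\psi_x = 0} \;=\; \bigcup_{j=1}^n H_j, \qquad H_j \defn \set{z_j = 0} = \bP\Big( \bigoplus_{i \ne j} V_i \Big). \]

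The argument then closes formally. Every $g \in H \subset \U V$ stabilises the scheme $\set{\psi_x = 0}$, hence permutes its $n$ irreducible components $H_1, \dots, H_n$, which yields a homomorphism $\rho \from H \to \frS_n$. If $g \in \ker \rho$, then $g$ fixes each $H_j$ and so also each intersection $\bigcap_{i \ne j} H_i = \bP(V_j)$; thus $g(V_j) = V_j$ for every $j$, which makes $g$ block-diagonal with respect to $V = \bigoplus V_j$, so $g \in \prod_j \U{V_j} = H^\circ$. Conversely $H^\circ \subset \ker \rho$, since each factor $\U{V_j}$ preserves every $H_i$. Therefore $\ker \rho = H^\circ$, so $\factor{H}{H^\circ} \isom \rho(H) \subset \frS_n$ is finite of order at most $n!$, and $H$ is a union of at most $n!$ cosets of $H^\circ$. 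Each such coset is the homeomorphic image of the compact group $H^\circ$ under a left translation in $\U n$, hence compact, and a finite union of compacts is compact.

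I do not expect a genuine obstacle, as \cref{Hol0 polydisc} has already done the hard work. The points that require a little care are the two pieces of bookkeeping mentioned above: that the twisting sheaf $\O X(-K_X) [\tensor] \eta$ is one-dimensional, so that the holonomy principle really produces an $H^\circ$-stable \emph{line} of polynomials — equivalently, after passing to weight spaces, a single monomial — and that the de~Rham factors $V_i$ are mutually orthogonal, which is what makes $H^\circ = \prod_i \U{V_i}$ the standard maximal torus of $\U n$ and in particular closed.
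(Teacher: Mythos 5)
Your proof is correct, and it reaches the same numerical conclusion, $[H:H^\circ]\le n!$, by a genuinely different mechanism than the paper. The paper does not return to the tensor $\psi$ at this stage: it uses only that $H^\circ$ is normal in $H$, so that $H\subset N_{\U V}(H^\circ)$, and then combines \cref{normalizer lemma} (a Schur's-lemma argument showing that anything normalizing a totally decomposed group must permute the de~Rham factors $V_i$) with the fact from \cref{Hol0 polydisc} that each $H_i^\circ=\U{V_i}$ is its own normalizer. You instead extract further geometric information from $\psi$: the holonomy principle together with the weight-space decomposition of $\Sym^n(V^\vee)$ under the torus $H^\circ=\U1^n$ pins $\psi_x$ down to the monomial $z_1\cdots z_n$ (reducedness being used a second time here), and the permutation action of $H$ on the $n$ components of $\set{z_1\cdots z_n=0}$ then yields the finite quotient $\factor{H}{H^\circ}\hookrightarrow\frS_n$. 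Both routes ultimately produce the same homomorphism $H\to\frS_n$ permuting the $V_i$; the paper's version is slightly more economical and would apply whenever each $H_i^\circ$ is self-normalizing in $\U{V_i}$, while yours has the bonus of identifying $\psi_x$ explicitly, a fact implicit in but not stated by \cref{Hol0 polydisc}. Two bookkeeping points you correctly anticipated and which do check out: the twisting line $\O X(-K_X)[\tensor]\eta$ carries a character of the full holonomy of $\sE$, so $\psi_x$ really does span an $H$-stable (not merely $H^\circ$-stable) line; and the reducedness hypothesis of~\lref{main1.3} is stated at some point $p$ rather than at the base point $x$, the transfer being exactly what parallelity (\lref{bochner cor.2}) provides.
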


\begin{proof}
By \cref{Hol0 polydisc}, we have $H_i^\circ = \U{V_i}$ and in particular $N_{\U{V_i}}(H_i^\circ) = H_i^\circ$ for each $1 \le i \le n$.
\cref{normalizer lemma} then shows that the normalizer $N(H^\circ)$ is in fact a finite extension of $H^\circ$.
But the identity component $H^\circ \subset H$ is a normal subgroup, so $H \subset N(H^\circ)$.
Therefore also $H$ is a finite extension of $H^\circ$, hence compact.
\end{proof}

\section{Proof of \cref{main1}}

\cref{main1} follows immediately from the following more precise and slightly stronger result.

\begin{thm} \label{main1'}
Let $X$ be a normal irreducible compact complex space of dimension~$n$.
Then, the following statements are equivalent:
\begin{enumerate}
\item\label{main1'.1} We have $X \isom \factor{\H n}\Gamma$, where $\Gamma \subset \Aut(\H n)$ is a discrete cocompact subgroup whose action is free in codimension one.
\item\label{main1'.2} We have $X \isom \factor YG$, where $Y$ is a compact complex manifold whose universal cover $\wt Y$ is isomorphic to $\H n$ and $G \subset \Aut(Y)$ is a finite subgroup whose action is free in codimension one.
\item\label{main1'.3} The space $X$ is a projective variety with klt singularities, the canonical divisor $K_X$ is ample, and there is a semispecial tensor $\psi$ on $X$ such that for some (equivalently, any) point $p \in \reg X$, the hypersurface $\set{ \psi_p = 0 } \subset \bP(T_p X)$ is reduced.
\end{enumerate}
\end{thm}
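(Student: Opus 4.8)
First, I would run the cycle of implications \lref{main1'.1} $\Rightarrow$ \lref{main1'.2} $\Rightarrow$ \lref{main1'.3} $\Rightarrow$ \lref{main1'.2} $\Rightarrow$ \lref{main1'.1}. Note that \cref{main1} is the special case in which $X$ is already assumed projective, klt and with $K_X$ ample, and this costs nothing, because \lref{main1'.2} forces exactly these properties: if $\wt Y \isom \H n$, then the Bergman metric descends to a Kähler--Einstein metric with negative Einstein constant on the compact manifold $Y$, so $K_Y$ is ample, and since $Y \to X$ is \qe it follows that $X$ is projective with klt singularities and ample $K_X$. The equivalence \lref{main1'.1} $\Leftrightarrow$ \lref{main1'.2} is formal: given \lref{main1'.1}, Selberg's lemma produces a torsion-free normal subgroup $\Gamma' \triangleleft \Gamma$ of finite index, which acts freely on the contractible manifold $\H n$; then $Y \defn \factor{\H n}{\Gamma'}$ is a compact manifold with $\wt Y \isom \H n$, $G \defn \factor{\Gamma}{\Gamma'}$ is finite with $\factor YG \isom X$, and its action is free in codimension one because that of $\Gamma$ is. Conversely, every element of $G$ lifts to $\Aut(\H n)$, and the lifts together with $\pi_1(Y)$ generate a discrete cocompact subgroup $\Gamma \subset \Aut(\H n)$, acting freely in codimension one, with $\factor{\H n}{\Gamma} \isom \factor YG \isom X$.

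For \lref{main1'.2} $\Rightarrow$ \lref{main1'.3} I would produce a semispecial tensor explicitly. Write $\H n = \bB^1 \x \cdots \x \bB^1$, so that $\Omegap{\H n}1 = \bigoplus_{i=1}^n \Omega_i$ with $\Omega_i$ pulled back from the $i$-th factor, and consider the product map $\Omega_1 \tensor \cdots \tensor \Omega_n \to \Sym^n \Omegap{\H n}1$. It is equivariant for the evident action on the source $\Omega_1 \tensor \cdots \tensor \Omega_n$, which differs from the canonical $\Aut(\H n)$-linearization of $K_{\H n} = \Omega_1 \tensor \cdots \tensor \Omega_n$ by the sign of the factor permutation (reordering tensor factors of line bundles introduces no sign, reordering a determinant does). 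Twisting by $K_{\H n}^{-1}$ thus turns it into a section $\psi_0 \in \HH0.\H n.\Sym^n(\Omegap{\H n}1)(-K_{\H n}).$ that is $\Aut(\H n)$-semi-invariant with that character. As the character $\Gamma \to S_n \to \{\pm 1\}$ cuts out a rank one flat sheaf $\eta$ on $\reg X$ with $\eta^{[2]} \isom \O{\reg X}$, the section $\psi_0$ descends to a section of $\Sym^n(\Omegap{\reg X}1)(-K_{\reg X}) [\tensor] \eta$, which extends reflexively over $X$ to a semispecial tensor $\psi$. At any $p \in \reg X$, lifting to $\tilde p \in \H n$ identifies $\psi_p$ with the polynomial $z_1 \cdots z_n$ on $T_{\tilde p}\H n = \bigoplus_i (T_{\tilde p})_i$, whose zero scheme is a union of $n$ distinct hyperplanes, hence reduced.

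The substance is in \lref{main1'.3} $\Rightarrow$ \lref{main1'.2}. By \cref{bochner cor} the semispecial tensor $\psi$ is parallel with respect to $\omega_\ke$; by \cref{Hol0 polydisc} the restricted holonomy of $(\reg X, \omega_\ke)$ is $H^\circ = {\U1}^n$, with totally decomposed tangent space $V = V_1 \oplus \cdots \oplus V_n$ into lines; and by \cref{Hol full polydisc} the full holonomy $H$ is compact, so $H/H^\circ$ is finite. I would then pass to the \qe Galois cover $\gamma \from \hat X \to X$ corresponding to the finite-index subgroup $\ker\big(\pi_1(\reg X, x) \to H/H^\circ\big)$; it extends over all of $\hat X$ because $\sing X$ has codimension $\ge 2$, $\hat X$ is again klt with $K_{\hat X} = \gamma^* K_X$ ample, and $\gamma^* \omega_\ke$ is its Kähler--Einstein metric, now with connected holonomy $\isom {\U1}^n$. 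The parallel splitting into lines is preserved by the whole holonomy of $\hat X$, so by the holonomy principle it spreads out to a splitting $\T{\reg{\hat X}} = \bigoplus_{i=1}^n \hat L_i$ into parallel holomorphic line subbundles; since parallel distributions are integrable and totally geodesic, $(\reg{\hat X}, \gamma^* \omega_\ke)$ is locally a Kähler product of $n$ negatively curved Riemann surface pieces. One then shows that, after a further \qe cover, $\hat X$ becomes isomorphic to an actual product $C_1 \x \cdots \x C_n$ of smooth projective curves, each of genus $\ge 2$ (ampleness of the canonical divisor being preserved under the cover). By uniformization such a product has universal cover $\bB^1 \x \cdots \x \bB^1 = \H n$, and passing to the Galois closure of the tower $C_1 \x \cdots \x C_n \to \hat X \to X$ over $X$ — which remains \qe over $X$ and, being étale over the smooth product by purity, has a smooth total space $Y$ with $\wt Y \isom \H n$ — exhibits $X$ in the form $\factor YG$ of \lref{main1'.2}.

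The main obstacle is exactly the penultimate step just described: upgrading the \emph{local} metric product structure on $\reg{\hat X}$, equivalently the reflexive splitting $\T{\hat X} = \bigoplus \hat L_i$ into rank one summands, to a \emph{global} algebraic product of curves after a finite \qe cover. In the smooth projective case this is classical, but over a klt space one must control the sheaves $\hat L_i$ and the associated foliations across $\sing{\hat X}$ — where $\gamma^* \omega_\ke$ need not even be complete — and argue that their leaves are algebraic curves. I expect this to follow by combining the structure theory of foliations on klt spaces with the uniformization of curves; this is where the real work of the proof lies.
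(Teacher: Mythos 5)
Your overall architecture matches the paper's: the same cycle of implications, the same explicit descent of $\frac{\d z_1 \cdots \d z_n}{\d z_1 \wedge \cdots \wedge \d z_n}$ via the sign character of $\Aut(\H n) \to \frS_n$ for ``\lref{main1'.2} $\imp$ \lref{main1'.3}'', and the same use of \cref{bochner cor}, \cref{Hol0 polydisc} and \cref{Hol full polydisc} followed by the quasi-\'etale cover killing $H/H^\circ$ for the converse. The problem is the step you yourself flag as ``where the real work lies'': it is not merely left unproven, the route you propose for it would fail. You claim that after a further quasi-\'etale cover, $\hat X$ becomes an actual product $C_1 \times \cdots \times C_n$ of curves. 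This is false already for smooth surfaces: a quotient of $\bB^1 \times \bB^1$ by an irreducible torsion-free cocompact lattice $\Gamma \subset \Aut(\bB^1)^2$ (for instance one arising from a quaternion algebra over a real quadratic field) satisfies all three conditions of the theorem and has split tangent bundle, yet no finite \'etale cover of it is a product of curves, because every finite-index subgroup of an irreducible lattice is again irreducible. The local metric product structure globalizes on the \emph{universal cover}, not on any finite cover, so ``product of curves after a quasi-\'etale cover'' is the wrong target.

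What the paper does at this point is cite its own prerequisites: once the holonomy of the cover $X'$ is connected and equal to ${\U1}^n$, so that $\T{\reg{X'}}$ splits into line bundles, \cite[Thm.~5.1]{Patel23} or \cite[Cor.~1.5]{GrafPatel24} produce a finite quasi-\'etale Galois cover $Y \to X'$ with $Y$ a projective \emph{manifold} whose universal cover is $\H n$ --- no product structure on $Y$ itself is claimed --- and a Galois closure then yields \lref{main1'.2}. If you want to avoid that citation, you must argue on the universal cover of a suitable smooth model in the spirit of Beauville's splitting theorem, rather than trying to realize the splitting by a global algebraic product after a finite cover. The rest of your write-up (the equivalence of \lref{main1'.1} and \lref{main1'.2} via Selberg's lemma and lifting, and the verification of reducedness of $\set{\psi_p = 0}$) agrees with the paper.
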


\begin{proof}
``\lref{main1'.1} $\imp$ \lref{main1'.2}'': This is essentially a consequence of Selberg's lemma, cf.~\cite[proof of implication ``(1.3.1) $\imp$ (1.3.2)'']{GKPT1_ENS}.
Note in particular that said proof and all its references continue to work verbatim if the unit ball $\bB^n$ is replaced by the polydisc $\H n$ everywhere.

``\lref{main1'.2} $\imp$ \lref{main1'.3}'':
By~\cite[Ch.~3, Thm.~8.4]{MorrowKodaira71}, $K_Y$ is ample and in particular $Y$ is projective.
Since $f \from Y \to X$ is a finite \qe Galois cover, it follows that $X$ is projective with klt singularities and that $K_Y = f^* K_X$.
Therefore also $K_X$ is ample.

To obtain a semispecial tensor $\psi$ on $X$, we first remark that~\lref{main1'.1} and~\lref{main1'.2} are actually equivalent.
This is because the composed map
\[ \H n \isom \wt Y \lto Y \lto X \]
exhibits $X$ as a quotient of $\H n$ by a discrete cocompact subgroup $\Gamma \subset \Aut(\H n)$ acting freely in codimension one.
See~\cite[proof of implication ``(1.3.3) $\imp$ (1.3.1)'']{GKPT1_ENS} for more details, and note again that replacing the unit ball by the polydisc does not affect the validity of the argument.

Now consider the following tensor $\wt\psi$ on $\H n$ with coordinates $z_1, \dots, z_n$:
\[ \wt\psi \defn \frac{\d z_1 \cdots \d z_n}{\d z_1 \wedge \cdots \wedge \d z_n} \in \HH0.\H n.\Sym^n(\Omegap{\H n}1)(-K_{\H n})., \]
which is just a suggestive notation for the homomorphism $\O{\H n}(K_{\H n}) \to \Sym^n(\Omegap{\H n}1)$ sending $\d z_1 \wedge \cdots \wedge \d z_n \mapsto \d z_1 \cdots \d z_n$.
Recall from~\cite[Cor.~on p.~167]{Rudin69} that the automorphism group $\Aut(\H n)$ is the semidirect product $\Aut(\bH)^n \rtimes \frS_n$.
More precisely,  there is a split short exact sequence
\[ 1 \lto \Aut(\bH)^n \lto \Aut(\H n) \lto \frS_n \lto 1, \]
where $\frS_n$ is the symmetric group on $n$ letters.
Clearly, $\wt\psi$ is invariant under the index two subgroup
\[ \Aut(\bH)^n \rtimes \frA_n \subset \Aut(\H n), \]
where $\frA_n \subset \frS_n$ denotes the alternating group.
We distinguish two cases: if $\Gamma$ is contained in $\Aut(\bH)^n \rtimes \frA_n$, then $\wt\psi$ directly descends to a semispecial tensor $\psi$ on $X$, with $\eta = \O X$.
Otherwise, the quotient of $\H n$ by $\Gamma' \defn \Gamma \cap (\Aut(\bH)^n \rtimes \frA_n)$ yields a \qe double cover $\sigma \from X' \to X$ such that there is a factorization
\[ \H n \lto X' \xrightarrow{\;\;\sigma\;\;} X \]
and $\wt\psi$ descends to a semispecial tensor $\psi' \in \HH0.X'.\Sym^n(\Omegap{X'}1)(-K_{X'}).$.
Write $\sigma_* \O{X'} = \O X \oplus \eta$, where $\eta$ is a rank one reflexive sheaf with $\eta^{[2]} \isom \O X$.
Then we have an isomorphism
\[ \sigma_* \big( \!\Sym^n(\Omegap{X'}1)(-K_{X'}) \big) = \Sym^n(\Omegap X1)(-K_X) \oplus \big( \!\Sym^n(\Omegap X1)(-K_X) [\tensor] \eta \big) \]
obtained from the projection formula on $\reg X$ and extended to all of $X$ by reflexivity.
Under this isomorphism, $\psi'$ corresponds to a global section $\psi$ of the second summand on the right-hand side, which again is a semispecial tensor on $X$.

In both cases, the hypersurface $\set{ \psi_p = 0 } \subset \bP(T_p X)$ is reduced for any $p \in \reg X$ because $\wt\psi$ has this property.
The fact that reducedness at some smooth point implies reducedness at any smooth point is a consequence of~\lref{bochner cor.2}.

``\lref{main1'.3} $\imp$ \lref{main1'.1}'': Consider the \kahler--Einstein metric $\omega_\ke$ as in \cref{bochner setup}.
By \cref{Hol0 polydisc} and \cref{Hol full polydisc}, the holonomy group $H$ of $(\reg X, \omega_\ke)$ is a finite extension of ${\U1}^n$.
Let $X' \to X$ be the finite \qe Galois cover corresponding to the finite quotient $\pi_1(\reg X) \surj \factor{H}{H^\circ}$.
Then the holonomy of $(\reg X', \omega_{\ke, X'})$ is isomorphic to ${\U1}^n$.
In particular, the tangent bundle $\T{\reg X'}$ splits as a direct sum of line bundles.
By~\cite[Thm.~5.1]{Patel23} or~\cite[Cor.~1.5]{GrafPatel24}, there is a finite \qe Galois cover $Y \to X'$ such that $Y$ is a projective manifold whose universal cover $\wt Y$ is $\H n$.
After taking Galois closure~\cite[Lemma~2.8]{BochnerCGGN}, we may assume that $f \from Y \to X$ is also Galois.

This argument already shows that ``\lref{main1'.3} $\imp$ \lref{main1'.2}'' holds.
Since we have remarked above that~\lref{main1'.1} and~\lref{main1'.2} are equivalent, the desired implication is proven.
\end{proof}

\section{Proof of corollaries}

\begin{proof}[Proof of \cref{cor gen type}]
By~\cite{BCHM10}, we can run the MMP on $X$ and obtain a birational contraction to the canonical model $\phi \from X \dashrightarrow X_\canmod$, where $K_{X_\canmod}$ is ample and $X_\canmod$ has canonical (in particular, klt) singularities.
Since $\phi$ is a contraction (meaning that $\phi\inv$ does not contract any divisors), the given semispecial tensor $\psi$ on $X$ induces a semispecial tensor $\psi' \defn \phi_*(\psi)$ on $X_\canmod$.
We must show that $\psi'$ defines a reduced hypersurface at some point of $X_\canmod$, for then we can conclude by \cref{main1'}.

\begin{clm}[Reducedness is an open property] \label{reduced is open}
The set
\[ U \defn \set{ x \in X \;\;\big|\;\; \text{$\psi_x \ne 0$ and $Z(\psi_x) \subset \bP(T_x X)$ is reduced} } \subset X \]
is Zariski-open in $X$.
\end{clm}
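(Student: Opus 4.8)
The plan is to realize $U$ as the complement of the vanishing locus of a finite collection of regular functions (or, more precisely, as an open subset in the Zariski topology), by using the standard fact that reducedness of a hypersurface $Z(F) \subset \bP^{N}$ cut out by a form $F$ is detected by the non-vanishing of a suitable resultant-type invariant in the coefficients of $F$. First I would set up the family: after shrinking to the smooth locus $\reg X$, the sheaf $\sE = \Sym^n(\Omegap X1)(-K_X) [\tensor] \eta$ is locally free, so over a Zariski-open cover $\set{U_\alpha}$ of $\reg X$ we may trivialize $\sE$ and write the section $\psi$ as a tuple of regular functions (the coefficients of $\psi_x$ as a degree-$n$ polynomial on $T_x X \isom \C^n$, once a trivialization of $\T X$ is also chosen on $U_\alpha$). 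Over $X \setminus \reg X$ we simply have $U \cap (X \setminus \reg X) = \emptyset$ by definition, which is (vacuously) open in that closed set; so it suffices to prove $U$ is open in $\reg X$.

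The key step is then purely a matter of classical elimination theory. For a fixed dimension $n$ and degree $d = n$, consider the universal degree-$d$ form $F = \sum_{|\beta| = d} c_\beta z^\beta$ on $\P{n-1}$ with indeterminate coefficients $c_\beta$. The locus of coefficient tuples for which either $F = 0$ or $Z(F)$ is non-reduced (equivalently, $Z(F)$ has a singular point, i.e.\ the partial derivatives $\partial_1 F, \dots, \partial_n F$ have a common zero on $\P{n-1}$, \emph{together with} the boundary case $F \equiv 0$) is a Zariski-closed cone in coefficient space: it is the zero set of the multivariate resultant $\mathrm{Res}(\partial_1 F, \dots, \partial_n F)$ (possibly together with the ideal generated by the $c_\beta$ themselves to absorb the $F\equiv 0$ case, which is already contained in the resultant vanishing locus). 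Hence there is a finite set of universal polynomials $\Delta_1, \dots, \Delta_r \in \Z[c_\beta]$ whose simultaneous vanishing is exactly the bad locus. Pulling these back along the coefficient functions of $\psi$ on each chart $U_\alpha$ gives regular functions $\Delta_i^{(\alpha)} \in \O X(U_\alpha)$, and
\[
U \cap U_\alpha = U_\alpha \setminus \bigcap_{i=1}^r \set{ \Delta_i^{(\alpha)} = 0 },
\]
which is Zariski-open. Since the $U_\alpha$ form an open cover of $\reg X$ and openness is local, $U$ is open in $\reg X$, hence in $X$.

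One should check that the condition defining $U$ is independent of the trivializations chosen: changing the trivialization of $\sE$ multiplies $\psi_x$ by a nonzero scalar and does not affect whether $Z(\psi_x)$ is reduced, while changing the trivialization of $\T X$ applies a linear change of coordinates on $\bP(T_x X)$, under which reducedness is again preserved; concretely, the vanishing of $\mathrm{Res}(\partial_i F)$ is invariant under $\mathrm{GL}_n$ acting on $F$ up to a nonzero factor, so the bad locus in coefficient space is well-defined geometrically and the $\Delta_i^{(\alpha)}$ glue (up to units) on overlaps. The main obstacle, such as it is, is simply bookkeeping: making precise that ``$Z(\psi_x)$ reduced'' is equivalent to ``no singular point on $\bP(T_x X)$'' (which holds because a hypersurface in projective space is non-reduced along a component iff that component consists of singular points — equivalently its Jacobian ideal vanishes there), and that this in turn is captured by a resultant, hence by polynomial conditions on the coefficients. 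No deep input is needed beyond the universality and properness built into elimination theory; the whole point is that reducedness is an algebraic condition that varies algebraically with $\psi$.
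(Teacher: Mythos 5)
Your overall strategy --- exhibiting the locus of ``bad'' degree-$n$ forms as a Zariski-closed subset of the coefficient space and pulling it back along the locally defined map $x \mapsto \psi_x$ --- is precisely the strategy of the paper's first proof, and your reduction to local trivializations and the independence-of-choices discussion are fine. The gap is in your identification of the bad locus. You assert that $Z(F)$ is non-reduced (or $F = 0$) if and only if the partials $\partial_1 F, \dots, \partial_n F$ have a common zero on $\P{n-1}$, i.e.\ if and only if the discriminant $\mathrm{Res}(\partial_1 F, \dots, \partial_n F)$ vanishes. This is false: vanishing of the discriminant characterizes \emph{singular} hypersurfaces, and a hypersurface can be singular yet perfectly reduced. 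For the relevant case $n = 3$ (cubics in $\P2$), the nodal cubic $F = zy^2 - x^3 - x^2 z$ is irreducible, hence reduced, but all three partials vanish at $[0:0:1]$, so your resultant vanishes there. Your parenthetical justification (``non-reduced along a component iff that component consists of singular points'') is itself correct, but it shows that non-reducedness forces the singular locus to contain a whole \emph{divisorial component} of $Z(F)$, whereas the resultant only detects a single common zero of the partials. Consequently the complement of the vanishing locus of your $\Delta_i$ is the set of $x$ with $Z(\psi_x)$ \emph{smooth}; this set is indeed open, but it is in general strictly contained in $U$, so openness of $U$ does not follow.

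The statement you need --- that the locus of non-reduced (or zero) forms is Zariski closed in the space $P_n$ of degree-$n$ forms --- is true, but requires a different argument. The paper's first proof obtains it as the union, over $1 \le d \le \lfloor n/2 \rfloor$, of the images $N_d$ of the proper maps $P_d \x P_{n-2d} \to P_n$, $(g, h) \mapsto g^2 h$; properness makes each $N_d$ closed, and a form is non-reduced exactly when it lies in some $N_d$. (The paper's second proof avoids coefficient spaces altogether, using flatness of the family of hypersurfaces over $\set{\psi_x \ne 0}$ and openness of reducedness of fibres in flat proper families.) Substituting either of these for your resultant step repairs the argument; the rest of your setup then goes through unchanged.
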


We give two proofs of \cref{reduced is open}: the first one is more elementary in nature, while the second one uses modern algebraic geometry machinery.

\begin{proof}[First proof of \cref{reduced is open}]
Let $P_d$ be the projective space of homogeneous degree $d$ polynomials in $n$ variables (with complex coefficients).
Consider the map
\[ \phi_d \from P_d \x P_{n - 2d} \lto P_n \]
given by sending $(g, h) \mapsto g^2 h$.
The image $N_d$ of the proper map $\phi_d$ is closed and the locus $R_n \subset P_n$ of reduced polynomials is the complement of the union $N_1 \cup \cdots \cup N_{\rd n/2.}$, hence open.
Sending $x \mapsto \psi_x$ defines a rational map $X \dashrightarrow P_n$ (at least locally, after choosing a local trivialization of $\T X$) and $U$ is the preimage of $R_n$ under this map.
\end{proof}

\begin{proof}[Second proof of \cref{reduced is open}]
Regarding $\psi$ as a map from the total space of $\T X$ to the total space of $\O X(-K_X) \tensor \eta$, we may consider the scheme-theoretic preimage of the zero section and then projectivize it to obtain a subscheme $Z \subset \bP(\T X)$.
Set $V \defn \set{ x \in X \;\big|\; \psi_x \ne 0 } \subset X$.
Then the base change $Z_V \to V$ is (at least locally, after choosing a local trivialization of $\T X$) a family of degree~$n$ hypersurfaces in~$\P{n-1}$.
Since the Hilbert polynomial of a hypersurface depends only on its degree and dimension, $Z_V \to V$ is therefore flat by~\cite[Ch.~III, Thm.~9.9]{Har77}.
Note that~$U$ is the set of points over which the fibre of $Z_V \to V$ is reduced.
This set is open by~\cite[Tag~0C0E]{stacks-project}.
\end{proof}

Back to the proof of \cref{cor gen type}: recall that by assumption, there is a point $p \in X$ such that the hypersurface $Z(\psi_p)$ is reduced.
By \cref{reduced is open}, we may assume that $p$ is contained in the locus where $\phi$ is an isomorphism.
It is then clear that $\psi'$ defines a reduced hypersurface at $\phi(p)$, ending the proof.
\end{proof}

\begin{rem} \label{rem unif}
In the situation of \cref{cor gen type}, we would like to say something about the universal cover of $X$ itself and not just about $X_\canmod$.
However, all we can say is the following:
\begin{itemize}
\item There is an open subset $U \subset X$ whose universal cover $\wt U$ is bimeromorphic to a big open subset of the polydisc.
(Here, ``big'' means that the complement has codimension at least two.)
\item We can blow up $X$ to a projective variety $X_2$ with quotient singularities which is of the form $\factor{\wt Y}{\Gamma}$, where $\wt Y$ is a complex manifold bimeromorphic to the polydisc and $\Gamma$ is the same group as in \cref{cor gen type}.
\end{itemize}
For the first statement, set $U \defn X \setminus \Exc(\phi)$.
Then $\phi(U) \subset X_\canmod$ is a big and smooth open subset, and it is isomorphic to $U$.
If $\pi \from \H n \to X_\canmod$ is the quotient map, then $\pi\inv \big( \phi(U) \big)$ is a big open subset of the polydisc, hence simply connected.
Since $\pi$ is \'etale over the smooth open subset $\phi(U)$, this shows that $\pi\inv \big( \phi(U) \big) \to \phi(U)$ is the universal cover of $\phi(U)$.
For the second statement, consider a resolution of indeterminacy
\[ \begin{tikzcd}
& X_1 \arrow[dl] \arrow[dr] \\
X \arrow[rr, dashed, "\phi"] & & X_\canmod
\end{tikzcd} \]
and the normalized fibre product
\[ \begin{tikzcd}
Y \arrow[d, "/\Gamma"'] \arrow[rr, "\text{bimerom.}"] & & \H n \arrow[d, "/\Gamma"] \\
X_1 \arrow[rr] & & X_\canmod.
\end{tikzcd} \]
Then $Y$ is a normal complex space on which $\Gamma$ acts, and $Y \to X_1 = \factor Y\Gamma$ is the quotient map.
Let $\wt Y \to Y$ be the functorial resolution~\cite[Thm.~3.45]{Kol07}.
The action of $\Gamma$ on $Y$ lifts to $\wt Y$, with quotient $X_2 \defn \factor{\wt Y}\Gamma$.
There is a commutative diagram
\[ \begin{tikzcd}
\wt Y \arrow[d, "/\Gamma"'] \arrow[rr, "\text{bimerom.}"] & & Y \arrow[d, "/\Gamma"] \\
X_2 \arrow[rr] & & X_1.
\end{tikzcd} \]
This proves the second statement.
In general, $\Gamma$ may not act freely on $\wt Y$ and so we do not get a statement about the universal cover of $X_2$.
However, there is a divisor~$\Delta$ on $X_2$ such that $\wt Y$ is the \emph{orbifold} universal cover of $(X_2, \Delta)$.
(For the definition of orbifold universal cover, see~\cite[Def.~24]{MYeq}.)
In fact, set $\Delta = \sum_i \big( 1 - \frac1{m_i} \big) \Delta_i$, where $m_i$ is the order of ramification of the map $\wt Y \to X_2$ over a general point of~$\Delta_i$.
\end{rem}

\begin{proof}[Proof of \cref{cor conjugate}]
This follows immediately from \cref{main1'} once we note that all of the following properties are invariant under conjugation by $\sigma$: having klt singularities, having ample canonical divisor, and carrying a semispecial tensor with reduced hypersurface.
See~\cite[Sec.~5]{CataneseDiScala13} for more details.
\end{proof}

\begin{proof}[Proof of \cref{cor dim 3}]
The proof of ``\lref{dim3.1} $\imp$ \lref{dim3.2}'' is exactly the same as in the situation of \cref{main1}, and is therefore omitted.
For the converse, using the notation from \cref{setup holonomy} we only have to show that $H^\circ = {\U1}^3$, for then the rest of the argument goes through.

We follow the proof of~\cite[Thm.~1.9]{CataneseFranciosi09}.
They make a case distinction according to the structure of the cubic curve $\set{ \psi_p = 0 } \subset \bP(T_p X)$.
In cases (a)--(f) (using their notation), their arguments work verbatim for us.
Therefore we concentrate on case (g), which is the case that $\set{ \psi_p = 0 } = 3L$ is a triple line.
We will show that this case cannot occur.

Let $\sL^\circ \subset \Omegap{\reg X}1$ be the line subbundle corresponding to the \emph{reduced} zero locus of $\psi$ considered as a subset of $\bP(\T X)$.
Extend $\sL^\circ$ to a Weil divisorial (i.e.~rank one reflexive) sheaf $\sL \subset \Omegar X1$.
We obtain an inclusion
\[ \sL^{[3]}(-K_X) [\tensor] \eta \; \subset \; \Sym^3(\Omegap X1)(-K_X) [\tensor] \eta \]
and the given semispecial tensor $\psi$ is contained in the left-hand side, which therefore has a global section.
This means that $\O X(K_X) [\tensor] \eta \subset \sL^{[3]}$, so that $\sL$ is big because $K_X$ is ample.
But this contradicts $\sL \subset \Omegar X1$ by~\cite[Cor.~1.3]{Gra12}.
\end{proof}

\end{document}